\colorlet{MyBlue}{DodgerBlue!60!Black}
\colorlet{MyGreen}{DarkGreen!85!Black}
\numberwithin{equation}{section}  
\crefname{app}{Appendix}{Appendices}
\newcommand{\Z}{\mathbb{Z}}
\newcommand{\N}{\mathbb{N}}
\DeclareMathOperator*{\argmax}{arg\,max}
\DeclareMathOperator*{\argmin}{arg\,min}
\DeclareMathOperator{\ex}{\debug{\mathbb{E}}}
\DeclareMathOperator{\prob}{\debug{\mathbb{P}}}
\DeclareMathOperator{\val}{\debug{val}}
\DeclarePairedDelimiterX{\braket}[2]{\langle}{\rangle}{#1,#2}
\DeclarePairedDelimiterX{\inner}[2]{\langle}{\rangle}{#1,#2}
\DeclarePairedDelimiterX{\setdef}[2]{\{}{\}}{#1:#2}
\DeclarePairedDelimiterXPP{\probof}[1]{\prob}{(}{)}{}{%

#1}
\DeclarePairedDelimiterXPP{\exof}[1]{\ex}{[}{]}{}{%

#1}
\newcommand{\debug}[1]{#1}
\theoremstyle{plain}
\newtheorem{theorem}{Theorem}
\newtheorem*{corollary*}{Corollary}
\newtheorem{lemma}[theorem]{Lemma}
\newtheorem{proposition}[theorem]{Proposition}
\theoremstyle{definition}
\newtheorem{definition}[theorem]{Definition}
\newtheorem*{definition*}{Definition}
\newtheorem*{hypothesis*}{Hypothesis}
\theoremstyle{remark}
\newtheorem{remark}{Remark}
\newtheorem*{remark*}{Remark}
\newtheorem*{notation*}{Notational remark}
\newtheorem{example}{Example}
\numberwithin{theorem}{section}
\numberwithin{remark}{section}
\numberwithin{example}{section}
\newcommand{\x}{x_\lambda}
\newcommand{\y}{y_\lambda}
\newcommand{\vl}{v_\lambda}
\newcommand{\pj}{p_{-1}}
\newcommand{\pjj}{p_{0}}
\newcommand{\pjjj}{p_{+1}}
\newcommand{\qj}{q_{-1}}
\newcommand{\qjj}{q_{0}}
\newcommand{\qjjj}{q_{+1}}
\newcommand{\E}{\mathbb{E}}
\newcommand{\Proba}{\mathbb{P}}
\newcommand{\Scal}{\mathcal{S}}
\newcommand{\Tcal}{\mathcal{T}}
\newcommand{\1}{\mathbbm{1}}
\newcommand{\bsigma}{\bm\sigma}
\newcommand{\btau}{\bm\tau}
\newcommand{\bScal}{\bm\Scal}
\newcommand{\bTcal}{\bm\Tcal}
\newcommand{\bmu}{\bm\mu}
\newcommand{\bnu}{\bm\nu}
\begin{document}


\title
[Communicating Zero-Sum Product Stochastic Games]
{Communicating Zero-Sum Product Stochastic Games}

\author
[T.~Garrec]
{Tristan Garrec}
\address{TSE-R, Toulouse School of Economics,  Manufacture des Tabacs, 21 Allée de Brienne, 31015 Toulouse Cedex 6, France }
\email{tristan.garrec@ut-capitole.fr}


\subjclass[2010]{Primary 91A15 ; secondary 91A05, 91A50.}
\keywords{%
Stochastic games;
Zero sum;
Uniform value;
Asymptotic value;
Communication property.}

\begin{abstract}

We study two classes of zero-sum stochastic games with compact action sets and a finite product state space. These two classes assume a communication property on the state spaces of the players. For strongly communicating on one side games, we prove the existence of the uniform value. For weakly communicating on both sides games, we prove that the asymptotic value, and therefore the uniform value, may fail to exist.

\end{abstract}
\acresetall

\maketitle

\section{Introduction}
\subsection{Problem and contribution}
In a zero-sum stochastic game, two players interact repeatedly at discrete times, with opposite interests. At each stage, players face a zero-sum game given by the state of nature which evolves according to the current state, and the pair of actions players choose given the history. Therefore, the actions played at each stage impact both the payoff today and the law of the state of nature tomorrow. Players intend to optimize their expected overall payoff. The $n$-stage repeated and the $\lambda$-discounted games are the games in which the overall payoffs are respectively the Cesàro and Abel means of the stage payoffs. Under mild assumptions both games have a value denoted respectively $v_n$ and $v_\lambda$.

A fundamental question arising in the theory of dynamic games is the asymptotic behavior of these values. We shall focus on the two following approaches of this issue. The asymptotic approach studies the convergence of the values of the $n$-stage repeated game and the $\lambda$-discounted game, as $n$ goes to infinity and $\lambda$ goes to $0$, that is as players become more patient. If these quantities converge and are equal, the game is said to have an asymptotic value. The uniform approach is dedicated to the existence, for both players, of strategies that are $\varepsilon$-optimal in every $n$-stage repeated game, provided that $n$ is large enough. If such strategies exist, players are able to play optimally in every game long enough without knowing the length of the game. In that case, the game is said to have a uniform value. These strategies are also $\varepsilon$-optimal in every $\lambda$-discounted game, provided that $\lambda$ is small enough. While the existence of the uniform value implies the existence of the asymptotic value, the converse is not true.

The aim of this paper is to study the asymptotic and uniform values in two classes of zero-sum product stochastic games. A product state space is of the form $X\times Y$. Moreover, players control the transitions on their own components of the state space, that is the next state in $X$ only depends on the current state in $X$ and the action of player 1, and similarly for $Y$. We consider the case where $X$ and $Y$ are finite, and action sets are compact. The two classes we are interested in assume a communication property on the state spaces of the players. These are called the strong and the weak communication properties --- the strong communication property implying the weak one.

The first class is the class of strongly communicating on one side zero-sum product stochastic games. In such games, for one player, there exists a time $T$ such that independently of his choice of policy, there is a positive probability of passing from any state to any other state in his component of the state space in exactly $T$ stages. This assumption, of ergodic nature, implies that the current state of the player having the strong communication property has in the long run little importance.

The second class is the class of weakly communicating on both sides zero-sum product stochastic games. In such games, for each player there exists a time $T$ and a policy such that, for any two states in their components of the state space, they can move from one to the other with positive probability in exactly $T$ stages. Thus players totally control the dynamics on their components of the state space.

Strongly communicating on one side and weakly communicating on both sides zero-sum product stochastic games have, to our knowledge, never been studied before.

For strongly communicating on one side games, we prove the existence of the uniform value, which does not depend on the initial state of the player having the strong communication property. Furthermore we prove that this player has $\varepsilon$-optimal strategies that have a simple structure. We call them Markov periodic strategies. Under these strategies, the action chosen at each stage does not depend on the whole history but only on the current state and stage modulo the period. Hence they are a particular case of Markov strategies, for which the actions chosen depend only on the current state and stage, and are more general than stationary strategies, which only depend on the current state. 
Our proof is based on a classification of the state space of the player who is not assumed to have the strong communication property (note that if both players have it, the proof is considerably simpler and the decomposition is actually not needed). This decomposition relies on recurrent classes induced by stationary policies. It has been introduced for Markov Decision Processes (MDP) by \cite{rossvaradarajan1991}, similar classifications have been used by \cite{bather1973,solan2003,fleschetal2008}. Building on that classification, we consider a family of auxiliary stochastic games and prove that they have a uniform value independent of the initial state. Finally we build an auxiliary MDP for the player who does not have the strong communication property, whose payoffs are the uniform values of the previous auxiliary games. We conclude by proving that the uniform value of the MDP is also the uniform value of the initial game.

Regarding weakly communicating on both sides games, we provide an example of a game which does not have an asymptotic value (and hence neither has a uniform one). Our proof is based on a reduction of this example to a simpler game of perfect information with two absorbing and two non absorbing states introduced by \cite{renault:hal-02130451}. A key ingredient of the non-existence of the asymptotic value is the non semi-algebraic aspect of the action set of one player. The example in particular shows that in weakly communicating on both sides games, even if players can go from any state to any other state of their component in finite time, they can make mistakes that are irreversible with regards to the joint state.

\subsection{Related literature}
Zero-sum stochastic games were introduced by \cite{shapley1953} in the finite setting (finite state and action sets), for which he proved the existence of the value in the $\lambda$-discounted game. \cite{mertensneyman1981} proved the existence of the uniform value. Their proof is based on the fact that the value of the $\lambda$-discounted game has bounded variations in $\lambda$, as shown by \cite{bewleykohlberg1976}. It is a key question whether the existence of the uniform value extends to non finite zero-sum stochastic games.

This question has been answered positively for several classes of zero-sum stochastic games with a finite state space and non finite action sets, as we consider in this paper. For absorbing games \citep{mertensetal2009} and recursive games \citep{lisorin2016}, the proofs use the operator approach of \cite{rosenbergsorin2001} that relies on the Shapley operator which entirely contains the dynamics of the game. Still for a finite state space, \cite{bolteetal2014} showed that games with semi-algebraic (or more generally definable) transitions and actions set have a uniform value. Finally, \cite{renault2010} proved the existence of the uniform value in MDPs with a finite state space and arbitrary action set.

However, in the last few years, several counterexamples to the existence of the asymptotic value in zero-sum stochastic games with finite state space and compact action sets have been proposed, see \citep{vigeral2013,sorinvigeral2015,ziliotto2016a}, ending the long standing idea that such games had an asymptotic value. \cite{larakirenault2017} provided such a counterexample with a product state space. As it is the case for the counterexample presented in this paper, all these counterexamples have in common to have non semi-algebraic transition probabilities or non semi-algebraic action sets. This is a key element to make the value oscillate. It should be put into perspective with the work of \cite{bolteetal2014} on definable zero-sum stochastic games. Another feature these counterexamples have in common is to have absorbing states. These states which cannot be left once reached are incompatible be the weak communication property. This is a major difference between previous counterexamples and the present one.

Zero-sum stochastic games on a product state space have been introduced by \cite{altmanetal2005}, who examined the case where each player only observes his component of the state and his actions, and showed that these games can be solved by linear programming. \cite{fleschetal2008,fleschetal2009} studied equilibria in $N$-players finite product stochastic games. They however considered an overall payoff which is the limit inferior of the $n$-stage repeated game payoff, while we are interested in the existence of the uniform value. Finally, \cite{larakirenault2017} showed the existence of the asymptotic value in zero-sum product stochastic games under a strong acyclicity condition. This strong acyclicity condition encompasses the irreversibility in the transitions of several classes of repeated games for which the asymptotic value is known to exist. The strong acyclicity condition is incompatible with our weak communication property. It is important to understand in which classes of stochastic games that do not assume an irreversibility condition the asymptotic value may exist.

\cite{gillette1957} introduced games in which independently of the choice of strategies, there is a positive probability of passing from any state to any other state in exactly $T$ stages. He called them cyclic stochastic games. These were also investigated by \cite{hoffmankarp1966}, \cite{bewleykohlberg1978} and \cite{vrieze2003}. A similar assumption has also been examined by \cite{fudenbergyamamoto2011} for games where players observe the state and a public signal related to the actions played. However in these articles the property is considered on the whole state space and not only on one component of a product state space.

\subsection{Organization of the paper}
The article is organized as follows. In \cref{sec_stochasticgames} we describe the model of zero-sum product stochastic games and recall some elementary facts. In \cref{sec_mainresults} we give formal definitions of strongly and weakly communicating, and state the two main theorems. Finally, \cref{sec_proofthstrong,sec_proofthweak} are dedicated to the proofs of the two main theorems.

\section{Preliminaries on zero-sum product stochastic games}
\label{sec_stochasticgames}
\subsection{Model and course of the game}

Let $X$ and $Y$ be two nonempty finite sets.
Let $A$ and $B$ be two nonempty compact metric sets endowed with their Borel $\sigma$-algebras.
Let $p : X\times A \to \Delta(X)$ and $q : Y\times B \to \Delta(Y)$, be such that for all $x,x'\in X$ and all $y,y'\in Y$, $p(x'|x,\cdot)$ and  $q(y'|y,\cdot)$ are continuous. $\Delta(X)$ denotes the set of probability measures over $X$, and similarly for $\Delta(Y)$.
Let $u : X\times Y\times A\times B \to [0,1]$, be such that for all $(x,y)\in X\times Y$ and all $a\in A$ and $b\in B$, $u(x,y,\cdot,b)$ and $u(x,y,a,\cdot)$ are continuous.

$X$ is the state space of player 1, $Y$ is the state space of player 2.
$A$ is the action set of player 1, $B$ is the action set of player 2. It is without loss of generality that the action sets do not depend on the current state.
$p$ is the transition probability of player 1, $q$ is the transition probability of player 2.
$u$ is the payoff to player 1.

Let $\Gamma = (X,Y,A,B,p,q,u).$ The game $\Gamma$ is played in stages as follows: an initial state $(x_1,y_1)\in X\times Y$ is given and known by the players. Inductively at stage $n$, knowing the past history $h_n = (x_1,y_1,a_1,b_1,\dots,x_{n-1},y_{n-1},a_{n-1},b_{n-1},x_n,y_n)$, player 1 and 2 simultaneously choose an action, respectively $a_n\in A$ and $b_n\in B$. A new state $x_{n+1}\in X$ is selected according to the distribution $p(\cdot|x_n,a_n)$ on $X$ and a new state $y_{n+1}\in Y$ is selected according to the distribution $q(\cdot|y_n,b_n)$ on $Y$. The payoff to player 1 at stage $n$ is $u_n = u(x_n,y_n,a_n,b_n)$. 

\subsection{Policies and Strategies}

Since the state space of the game $\Gamma$ is a product of two sets $X$ and $Y$, we distinguish policies of the players, which only depend on their own history, i.e., on their actions and component of the product state space, from strategies, which depend on the joint history. We denote strategies and strategy sets depending on the joint history with bold letters.

We denote the set of positive integers by $\N^\ast$. For $n\in \N^\ast$, let $H^1_n=X\times(A\times X)^{n-1}$ be the set of histories of player 1 at stage $n$ and $H^1_\infty = (A\times X)^\infty$ be the set of infinite histories. $H^1_n$ is endowed with the product $\sigma$-algebra $\mathcal{H}^1_n$, and $H^1_\infty$ with the product $\sigma$-algebra $\mathcal{H}^1_\infty$ spanned by $\bigcup_{n\geq 1} \mathcal{H}^1_n$. Let $\Scal$ denote the set of behavior policies of player 1 depending only on his own history. A policy $\sigma\in\Scal$ is a sequence $(\sigma_n)_{n\in\N^\ast}$, where $\sigma_n$ is a measurable map from $(H^1_n,\mathcal{H}^1_n)$ to $\Delta(A)$. A policy $\sigma$ together with an initial state $x\in X$ define a unique probability distribution over $H^1_\infty$ which we denote $\Proba_{\sigma}^{x}$. We define analogous objects for player 2 and denote $\Tcal$ the set of behavior policies of player 1 depending only on his own history.

For $n\in \N^\ast$, let $H_n=X\times Y\times(A\times B\times X\times Y )^{n-1}$ be the set of joint histories at stage $n$ and $H_\infty = (A\times B\times X\times Y)^\infty$ be the set of infinite joint histories. $H_n$ is endowed with the product $\sigma$-algebra $\mathcal{H}_n$, and $H_\infty$ with the product $\sigma$-algebra $\mathcal{H}_\infty$ spanned by $\bigcup_{n\geq 1} \mathcal{H}_n$. Let $\bScal$ and $\bTcal$ denote the sets of behavior strategies of player 1 and player 2 respectively. A strategy $\bsigma\in\bScal$ is a sequence $(\bsigma_n)_{n\in\N^\ast}$, where $\bsigma_n$ is a measurable map from $(H_n,\mathcal{H}_n)$ to $\Delta(A)$, and likewise for $\bTcal$. A pair of strategies $(\bsigma,\btau)$ together with an initial state $(x,y)$ define a unique probability distribution over $H_\infty$ which we denote $\Proba_{\bsigma,\btau}^{x,y}$.

A strategy is a Markov strategy if the mixed action played at every stage depends only on the current stage and state. Markov periodic strategies are Markov strategies depending on the stage modulo the period and on the current state. Let us give a formal definition.

\begin{definition}
For $N\in\N^\ast,$ a strategy $\bsigma$ of player 1 is called an $N$-periodic Markov strategy if there exists $(\bmu_n)_{n\in\{1,\dots,N\}}\in \Delta(A)^{X\times Y\times N}$ such that for all $n\in\N^\ast$ and all $h_n = (x_1,y_1,a_1,b_1,\dots,x_{n-1},y_{n-1},a_{n-1},b_{n-1},x_n,y_n)\in H_n$, $\bsigma_n(h_n) = \bmu_{n'}(x_n,y_n)$, where $n'\in\{1,\dots,N\}$ is equal to $n$ modulo $N$.
Markov $N$-periodic strategies of player 2 are defined likewise.
\end{definition}

Stationary strategies are Markov strategies depending only on the current state, hence they are Markov $1$-periodic strategies. Again, stationary strategies on the product state space $X\times Y$ (elements of $\Delta(A)^{X\times Y}$ and $\Delta(B)^{X\times Y}$ for player 1 and 2 respectively), are denoted with bold letters. Stationary policies of the players on their components of the state space (elements of $\Delta(A)^X$ and $\Delta(B)^Y$ respectively) are denoted with letters that are not bold.

Finally, a pair of stationary strategies $(\bmu,\bnu)\in \Delta(A)^{X\times Y}\times\Delta(B)^{X\times Y}$ induces a Markov chain $(X_n,Y_n)_{n\geq 1}$ over $X\times Y$. A state $(x',y')\in X\times Y$ is said to be accessible from $(x,y)$ (in $t$ stages) under $(\bmu,\bnu)$ if $\Proba^{x,y}_{\bmu,\bnu}((X_t,Y_t)=(x',y'))>0$. More generally, a property is said to hold under $(\bmu,\bnu)$ if it holds for the Markov chain induced on $X\times Y$. We use similar vocabulary for stationary policies on $X$ and on $Y$.

\subsection{\texorpdfstring{$N$}{N}-stage and \texorpdfstring{$\lambda$}{lambda}-discounted games}

For all $N\in \N^\ast $, the $N$-stage game $\Gamma_N$ starting in $(x,y)\in X\times Y$, is the game in which the payoff is \[\gamma_N(\bsigma,\btau)(x,y) = \frac{1}{N}\sum_{n=1}^N \E_{\bsigma,\btau}^{x,y} (u_n),\] for all $(\bsigma,\btau)\in \bScal\times \bTcal$.
The value of the $N$-stage game starting at $(x,y)$ is denoted $v_N(x,y)$.
It is characterized by the following recursive equation: 
\begin{align}
\label{eq_shapley_GammaN}
v_{N+1}(x,y) &= \val_{\mu\in\Delta(A),\nu\in\Delta(B)} \left[\frac{1}{N+1}u(x,y,\mu,\nu) + \frac{N}{N+1}\E_{\mu,\nu}^{x,y}(v_N)\right],
\end{align}
where 
\begin{equation*}
\begin{split}
&u(x,y,\mu,\nu) = \int_{A\times B} u(x,y,a,b)d\mu(a)d\mu(b),
\text{ and }\\
&\E_{\mu,\nu}^{x,y}(v_N) = \sum_{x',y'\in X\times Y} v_N(x',y')\int_{A\times B} p(x'|x,a)q(y'|y,b)d\mu(a)d\nu(b).
\end{split}
\end{equation*}

Moreover, by \citep[proposition 5.3]{Sor:Springer2002}, both players have optimal Markov strategies.

For all $\lambda\in (0,1]$, the $\lambda$-discounted game $\Gamma_\lambda$ starting in $(x,y)\in X\times Y$, is the game in which the payoff is \[\gamma_\lambda(\bsigma,\btau)(x,y) = \lambda\sum_{n=1}^{+\infty} (1-\lambda)^{n-1} \E_{\bsigma,\btau}^{x,y} (u_n),\] for all $(\bsigma,\btau)\in \bScal\times \bTcal$.
The value of the $\lambda$-discounted game starting at $(x,y)$ is denoted $v_\lambda(x,y)$.
It is characterized by the following fixed point equation:
\begin{align*}
v_\lambda(x,y) &= \val_{\mu\in\Delta(A),\nu\in\Delta(B)} \left[\lambda\cdot u(x,y,\mu,\nu) + (1-\lambda)\E_{\mu,\nu}^{x,y}(v_\lambda)\right].
\end{align*}

If $v_n$ and $v_\lambda$ converge as $n$ goes to infinity and $\lambda$ goes to $0$, and the limits are equal, then the game is said to have an asymptotic value.
The Tauberian theorem of \cite{ziliotto2016b} applies in this setting and the asymptotic value exists if $(v_\lambda)_{\lambda\in (0,1]}$ converges as $\lambda$ goes to $0$.

\subsection{Uniform value and optimal strategies}

Fix an initial state $(x,y)\in X\times Y$.
Player 1 is said to uniformly guarantee $v_\infty\in [0,1]$ if he has a strategy which guarantees $v_\infty$ (up to $\varepsilon$) against any strategy of player 2, in any game $\Gamma_N$, provided that $N$ is large enough. Formally,
\[\forall\varepsilon>0 \ \exists \bsigma\in\bScal \ \exists M\in\N^\ast \ \forall \btau\in \bTcal \ \forall N\geq M \ \gamma_N(\bsigma,\btau)(x,y) \geq v_\infty-\varepsilon.\]
And similarly for player 2. If both players guarantee $v_\infty$, then it is called the uniform value of the game $\Gamma$ starting at $(x,y)$.

Let $\varepsilon\geq 0$. A strategy $\bsigma\in \bScal$ is said to be (uniformly) $\varepsilon$-optimal for player 1 if 
\[\exists M\in\N^\ast \ \forall \btau\in \bTcal \ \forall N\geq M \ \gamma_N(\bsigma,\btau)(x,y) \geq v_\infty-\varepsilon.\] And similarly for player 2.

The next proposition states that if one of the players plays a stationary strategy, then the other player has an $\varepsilon$-optimal best response which is also stationary.

\begin{proposition}
\label{prop_bestreply}
Let $\bmu\in\Delta(A)^{X\times Y}$ be a stationary strategy of player 1 in the game $\Gamma$. Then, for all $\varepsilon>0$ there exist $\bnu\in\Delta(B)^{X\times Y}$ and $M\in \N^\ast$ such that for all $N\geq M$ and all $\btau\in\bTcal \ \gamma_N(\bmu,\bnu)(x,y)\leq\gamma_N(\bmu,\btau)(x,y)+\varepsilon.$
\end{proposition}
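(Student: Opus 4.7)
The plan is to freeze player 1's strategy and reduce the problem to a Markov decision process. Fixing $\bmu$, player 2 faces an MDP with finite state space $X\times Y$, compact action set $B$, transition kernel $\tilde P((x',y')\mid (x,y),b) = q(y'\mid y, b)\int_A p(x'\mid x,a)\,d\bmu(x,y)(a)$ and one-stage cost $\tilde u(x,y,b)=\int_A u(x,y,a,b)\,d\bmu(x,y)(a)$, both continuous in $b$; player 2 minimizes. First I would invoke \cite{renault2010} on the existence of the uniform value for MDPs with finite state space and arbitrary action set: this provides a uniform value $w(x,y)=\lim_N\inf_\btau\gamma_N(\bmu,\btau)(x,y)$, equal by \cite{ziliotto2016b} to $\lim_{\lambda\to 0}v_\lambda^\bmu(x,y)$. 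In particular, for some $M_1$ one has $\gamma_N(\bmu,\btau)(x,y)\geq w(x,y)-\varepsilon/2$ for all $N\geq M_1$ and all $\btau\in\bTcal$.

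Next I would construct a stationary $\varepsilon/2$-optimal policy $\bnu\in\Delta(B)^{X\times Y}$ for player 2. By compactness of $B$ and continuity of $(\tilde u, \tilde P)$ in $b$, the $\lambda$-discounted Bellman equation attains its minimum in each state, and a measurable selector yields a stationary discounted-optimal policy $\bnu_\lambda$. Compactness of $\Delta(B)^{X\times Y}$ in the weak topology allows extracting an accumulation point $\bnu^\ast$ of $\{\bnu_\lambda\}$ as $\lambda\to 0$. On the finite set $X\times Y$, the Cesàro averages of the chain induced by $(\bmu,\bnu^\ast)$ converge geometrically fast to a limit $\bar g(\bnu^\ast)$. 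The Abelian identity $\lim_\lambda\gamma_\lambda(\bmu,\bnu^\ast)=\bar g(\bnu^\ast)$, combined with $\gamma_\lambda(\bmu,\bnu^\ast)\geq v_\lambda^\bmu\to w$ (feasibility for the minimizer) and continuity of $\gamma_\lambda(\bmu,\cdot)$ in $\bnu$ (which forces $\gamma_\lambda(\bmu,\bnu^\ast)-v_\lambda^\bmu\to 0$ along the extracting subsequence), yields $\bar g(\bnu^\ast)=w$. Hence $\gamma_N(\bmu,\bnu^\ast)(x,y)\leq w(x,y)+\varepsilon/2$ for all $N$ larger than some $M_2$.

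Setting $M=\max(M_1,M_2)$ delivers the desired inequality $\gamma_N(\bmu,\bnu^\ast)(x,y)\leq\gamma_N(\bmu,\btau)(x,y)+\varepsilon$ for all $N\geq M$ and all $\btau\in\bTcal$. The hardest step will be the second one: showing that an accumulation point $\bnu^\ast$ of the discounted-optimal policies realizes the Cesàro value $w$. Continuity of $\bnu\mapsto\lim_N\gamma_N(\bmu,\bnu)$ on the compact set $\Delta(B)^{X\times Y}$ is delicate because the recurrence-class structure of the induced chain can change with $\bnu$; the finiteness of $X\times Y$ is what ultimately keeps these variations manageable.
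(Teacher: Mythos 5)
Your first step coincides with the paper's: freeze $\bmu$ and observe that player 2 then faces a Markov decision process with finite state space $X\times Y$, compact action set $B$ and continuous data. The paper stops there and invokes \citep[corollary 5.26]{Sor:Springer2002}, which asserts precisely that such an MDP has a uniform value \emph{and} that the decision maker has a uniformly $\varepsilon$-optimal \emph{stationary} policy; the proposition then follows in two lines. You instead try to reprove that existence result via a vanishing-discount argument, and this is where there is a genuine gap.

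The problematic step is the claim that an accumulation point $\bnu^\ast$ of the discounted-optimal stationary policies $\bnu_\lambda$ satisfies $\bar g(\bnu^\ast)=w$. The inequality $\bar g(\bnu^\ast)\geq w$ is indeed free (feasibility plus the Abelian theorem for the fixed finite chain induced by $(\bmu,\bnu^\ast)$), but the direction you actually need, $\bar g(\bnu^\ast)\leq w$, is not delivered by the argument you give. You write that ``continuity of $\gamma_\lambda(\bmu,\cdot)$ in $\bnu$ forces $\gamma_\lambda(\bmu,\bnu^\ast)-v_\lambda^{\bmu}\to 0$ along the extracting subsequence.'' This is a diagonal limit: you know $\gamma_{\lambda_n}(\bmu,\bnu_{\lambda_n})=v_{\lambda_n}^{\bmu}$ and $\bnu_{\lambda_n}\to\bnu^\ast$, and you want $\gamma_{\lambda_n}(\bmu,\bnu^\ast)-\gamma_{\lambda_n}(\bmu,\bnu_{\lambda_n})\to 0$. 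Pointwise continuity of each $\gamma_\lambda(\bmu,\cdot)$ is not enough for this; you would need equicontinuity of the family $\{\gamma_\lambda(\bmu,\cdot)\}_{\lambda}$ as $\lambda\to 0$, and that is exactly what fails: as $\lambda\to 0$ the discounted payoff is governed by the long-run behavior of the induced chain, whose recurrence structure (and hence whose invariant measures) is discontinuous in $\bnu$ on a compact action set. Your own closing sentence acknowledges this difficulty but then dismisses it with ``the finiteness of $X\times Y$ is what ultimately keeps these variations manageable,'' which is an assertion, not an argument --- indeed, the paper's own \cref{example_scc1} is built precisely on such a discontinuity (an arbitrarily small perturbation of a stationary policy turning a recurrent state into a transient one). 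In short: the reduction to an MDP is correct and matches the paper, but the construction of the $\varepsilon$-optimal stationary $\bnu$ is not established; the honest options are to cite the known result for finite-state, compact-action MDPs as the paper does, or to supply a genuinely different construction (the known proofs do not proceed by taking a naive limit of discounted-optimal policies).
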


\begin{proof}
Let $\varepsilon>0$, $\bmu\in\Delta(A)^{X\times Y}$ and $(x,y)\in X\times Y$. When player 1 plays the stationary strategy $\bmu$, player 2 faces a Markov decision process having a uniform value $w_\infty$, and for which he has a uniformly $\varepsilon$-optimal stationary strategy $\bnu$, consult \citep[corollary 5.26]{Sor:Springer2002}. 

Thus, there exists $M\in\N^\ast$ such that for all $N\geq M$ one has $\gamma_N(\bmu,\bnu)(x,y)\leq w_\infty +\varepsilon$ and for all $\btau\in\bTcal$ one has $\gamma_N(\bmu,\btau)(x,y)\geq w_\infty-\varepsilon.$
\end{proof}

\section{Main results}
\label{sec_mainresults}
In the present article, we examine two communication properties on state spaces of the players. In words, a player has the strong communication property if there exists a time $T$ such that independently of his choice of policy (depending only on his own history), there is a positive probability of moving from any initial state of his component of the state space to any other state in exactly $T$ stages.

\begin{definition}
\label{def_stronglycommunicating}
Player 1 has the strong communication property if there exists $T\in\N^\ast$ such that for all policies $\sigma \in \Scal$ and all states $x,x'\in X$, one has $\Proba_{\sigma}^{x}(X_T = x') > 0$.
\end{definition}

A similar definition stands for player 2. A game is strongly communicating on one side if at least one player has the strong communication property.

In words, a player has the weak communication property if there exists a time $T$ and a policy (depending only on his own history) such that, for any initial state, he can reach any other state in his component of the state space in exactly $T$ stages, with positive probability.

\begin{definition}
\label{def_weak}
Player 1 has the weak communication property if there exists $T\in\N^\ast$ and a policy $\sigma \in \Scal$ such that for all states $x,x'\in X$, one has  $\Proba_{\sigma}^{x}(X_T = x') > 0$.
\end{definition}

A similar definition stands for player 2. A game is weakly communicating on both sides if both players have the weak communication property. Note that the strong communication property implies the weak one. 

The next simple example illustrates the notions of strong and weak communication.

\begin{example}
In \cref{fig_exemple_strweakcom} below we represent the state space and transition probabilities of player 1. The state space is $X = \{x,y,z\}$. Transition probabilities are represented by arrows between states. In state $x$ player 1 chooses $\alpha\in A$. With probability $\alpha$ the next state is $x$ and with probability $1-\alpha$ it is $y$. In state $y$ (resp. $z$) the next state is $y$ (resp. $x$) with probability $1$.

Hence if the action set $A$ equals $[0,1]$, player 1 has the weak communication property but does not have the strong communication property. Whereas if $A$ equals $[\varepsilon,1]$ for some $\varepsilon>0$, player 1 has the strong communication property.

\begin{figure}[ht]
\begin{center}
\begin{tikzpicture}[scale=1]

\node at (0,0){$z$};
\node at (1,1.732){$x$};
\node at (2,0){$y$};

\draw (0,0) circle (0.2);
\draw (1,1.732) circle (0.2);
\draw (2,0) circle (0.2);

\draw[->] (0,0.25) to [out=90,in=-135] (0.75,1.482);
\draw[->] (1.25,1.482) to [out=-45,in=90] (2,0.25);
\draw[->] (1.75,-0.25) to [out=-135,in=-45] (0.25,-0.25);

\draw[->] (0.8,1.9) to [out=135,in=180] (1,2.5)
	to [out=0,in=45] (1.2,1.9);		

\node [above] at (0,0.8) {$1$};
\node [above] at (2.3,0.8) {$1-\alpha$};
\node [below] at (1,-0.5) {$1$};
\node [above] at (1,2.5) {$\alpha$};

\end{tikzpicture}
\end{center}
\caption{Strong and weak communication}
\label{fig_exemple_strweakcom}
\end{figure}
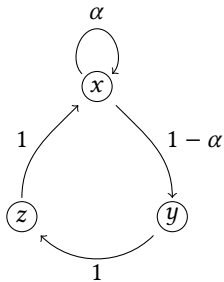
\end{example}

We prove in this article the following results.
\begin{theorem}
\label{THM_STRONG}
Any strongly communicating on one side zero-sum stochastic game has a uniform value.

Moreover, assuming player 1 has the strong communication property, the uniform value only depends on the initial state of player 2 and for all $\varepsilon>0$ player 1 has an $\varepsilon$-optimal Markov periodic strategy.
\end{theorem}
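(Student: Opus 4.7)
The plan follows the roadmap of the introduction. Assume throughout that player~1 has the strong communication property, with associated constant $T\in\N^\ast$. The heuristic is that strong communication forces the $X$-chain to be uniformly reachable after exactly~$T$ steps, independently of player~1's policy, so the initial $X$-state becomes irrelevant in the long run; the interesting dynamics therefore take place on the $Y$-coordinate, which player~2 controls.

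Step~1 is a Ross--Varadarajan style classification of $Y$ relative to the stationary policies of player~2: this yields a collection of maximal communicating subsets $C_1,\dots,C_k\subseteq Y$ together with a transient layer connecting them. In Step~2, for each such class $C$, I would introduce an auxiliary zero-sum stochastic game $\Gamma^{C}$ played on state space $X\times C$ in which player~2 is constrained to use policies keeping the $Y$-chain inside~$C$. The key technical claim is that $\Gamma^{C}$ admits a uniform value $v^C$ which does not depend on the initial state and that player~1 possesses $\varepsilon$-optimal Markov $T$-periodic strategies in it. The argument combines the reachability of the $X$-chain at time~$T$ (uniform over player~1's policies) with the recurrence of the $Y$-chain inside~$C$: starting from an optimal $\lambda$-discounted Markov strategy of player~1, I would average it along the $T$-skeleton chain to produce the desired Markov periodic object, use the strong communication property to control the cost of realigning the stage modulo~$T$, and exploit the recurrence on $C$ to make the Cesàro averages of the one-shot payoffs asymptotically insensitive to the $Y$-trajectory.

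Step~3 builds an MDP $\mathcal{M}$ for player~2 whose states are the classes $C_1,\dots,C_k$ (plus the transient part); the instantaneous cost at $C$ equals $v^C$ and transitions reflect player~2's ability to move between classes through his stationary policies. By \citet{renault2010}, $\mathcal{M}$ admits a uniform value $v_\infty$ together with an $\varepsilon$-optimal strategy for player~2. Gluing this strategy with the $\varepsilon$-optimal best replies provided by \cref{prop_bestreply} inside each class shows that player~2 guarantees at most $v_\infty+\varepsilon$ in $\Gamma$; conversely, splicing the Markov $T$-periodic $\varepsilon$-optimal strategies of the various $\Gamma^{C_j}$ according to the current $Y$-class shows that player~1 guarantees at least $v_\infty-\varepsilon$. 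This simultaneously yields the uniform value of $\Gamma$, its independence from the initial state of player~1, and the Markov periodic structure of player~1's $\varepsilon$-optimal strategies.

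The main obstacle is Step~2. Strong communication provides only one-sided ergodicity (on $X$) at the fixed delay~$T$, whereas $C$ equipped with player~2's dynamics has no guaranteed mixing or periodic structure; handling their interplay --- and, in particular, upgrading a $\lambda$-discounted optimal strategy into a uniformly $\varepsilon$-optimal Markov $T$-periodic one --- requires a careful Tauberian argument on the $T$-skeleton chain together with uniform control, over the admissible policies of player~2, of the hitting times inside~$C$. A secondary but delicate point in Step~3 is the synchronization of time scales: player~2's transitions between classes in $\mathcal{M}$ must occur only after the horizon needed for each $v^{C_j}$ to be approximately attained in $\Gamma^{C_j}$, which requires embedding stopping times in the construction of strategies.
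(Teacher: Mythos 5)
Your roadmap matches the paper's at the top level: classify $Y$ into maximal communicating sets, define restricted auxiliary games on each $X\times C_i$, show each has a state-independent uniform value $v^i_\infty$, and compare the original game with an MDP for player~2 whose running cost is $v^i_\infty$ on $C_i$. The genuine gap is in the half where player~1 must guarantee the value. You propose to splice the Markov periodic $\varepsilon$-optimal strategies of the various auxiliary games ``according to the current $Y$-class''. This is precisely the naive strategy whose analysis breaks down here: with compact action sets, player~2 can with positive probability make his state jump between maximal communicating sets and transient states infinitely often (see \cref{example_scc1}), so there need not be any almost-surely finite time after which play is confined to a single $X\times C_j$, and the Ces\`aro payoff of the spliced strategy cannot be decomposed class by class. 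Even granting eventual absorption, you would still have to show that the distribution over absorbing classes induced by player~2 against this strategy is one he could replicate in the MDP, so that the resulting mixture of the $v^{C_j}$'s is at least $w_\infty$; nothing in your sketch addresses this identification. The paper needs real machinery at this point: it appends a clock modulo $N_0$ so that the periodic strategies become stationary, invokes \cref{prop_bestreply} to reduce to a stationary best reply of player~2, proves that every recurrent class of the resulting joint chain lies inside a single $X\times C_{\psi(i)}\times[1,N_0]$ (\cref{lem_rec_class_tilde}), and finally shows by a simulation argument that the quantity player~1 guarantees is a limit payoff attainable by player~2 in the MDP, hence at least $w_\infty$. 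Your remark about ``embedding stopping times'' does not substitute for this chain of arguments.

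Two secondary problems. First, in Step~2 you claim $\varepsilon$-optimal Markov $T$-periodic strategies with $T$ the communication time; what one actually gets is period $N_0(\varepsilon)$, a horizon large enough that $v^i_{N_0}$ is within $\varepsilon$ of its limit, and the real content of that step --- the constancy of every uniform limit point of $(v^i_N)_N$ over $X\times C_i$, proved by an induction on the Shapley equation (\cref{lem_constantvalue}) --- is exactly what you defer to your ``main obstacle'' paragraph without an argument. Second, your MDP takes the classes themselves as states, but the probability of moving from $C_i$ to $C_j$ depends on the particular state $y\in C_i$ and the action chosen, so the quotient process is not Markov; the MDP must be kept on the full state space $Y$ with cost $v^i_\infty$ on $C_i$, as in the paper.
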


\begin{theorem}
\label{THM_WEAK}
There exists a weakly communicating on both sides zero-sum product stochastic game which does not admit an asymptotic value.
\end{theorem}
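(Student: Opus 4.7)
The plan is to produce an explicit counterexample, and the natural route is to reduce to the counterexample of \cite{renault:hal-02130451}, which exhibits a zero-sum stochastic game of perfect information with two non-absorbing states, two absorbing states, and a non-semi-algebraic action set whose discounted values $v_\lambda$ oscillate as $\lambda\to 0$. The goal is to construct a zero-sum product stochastic game $\Gamma = (X,Y,A,B,p,q,u)$ whose family $(v_\lambda)$ inherits the non-convergence from Renault's game, while the marginal dynamics on $X$ and on $Y$ each satisfy the weak communication property of \cref{def_weak}.

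Concretely, I would take $X = \{x_1, x_2\}$ and $Y = \{y_1, y_2\}$, so that the four joint states can be put in correspondence with Renault's four states: the diagonal pair $(x_1,y_1),(x_2,y_2)$ plays the role of the non-absorbing states while the anti-diagonal pair $(x_1,y_2),(x_2,y_1)$ plays the role of the absorbing states. To guarantee weak communication on each component, I give player~1 (respectively player~2) a distinguished action in each of his states that switches his component deterministically, providing the witness policy $\sigma$ required by \cref{def_weak}. The action sets $A$ and $B$ are then enlarged to include the Renault action parameters, with one of them chosen non-semi-algebraic exactly as in \cite{renault:hal-02130451}. The payoff $u$ and transition kernels $p,q$ are chosen so that when the joint state is on the diagonal the stage payoff and transitions reproduce Renault's non-absorbing stochastic game, whereas on the anti-diagonal the stage payoff is the appropriate Renault constant, and $p,q$ are tuned so that any further evolution of the joint state keeps the long-run discounted payoff equal to that constant (the anti-diagonal is a payoff-invariant region, although not a state-absorbing one on either component separately).

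The verification then has two parts. First, check \cref{def_weak} for each player: since the ``switch'' action is available in both states on each component, the single policy that always switches visits both states with positive probability at any given parity of time, so $T = 1$ (or $T = 2$, if needed) and the required witness exists. Second, write down the Shapley equation for $\Gamma$ and observe that, owing to the payoff-invariance on the anti-diagonal, it reduces to the Shapley equation of Renault's game; by \cite{ziliotto2016b} and the non-convergence of $(v_\lambda)$ in Renault's game, $(v_\lambda^{\Gamma})$ fails to converge as $\lambda \to 0$, so $\Gamma$ has no asymptotic value and a fortiori no uniform value.

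The hard part will be the second bullet of the construction: engineering $p,q,u$ on the anti-diagonal joint states so that, although neither component admits a genuine absorbing state (which would contradict weak communication), the joint $\lambda$-discounted payoff starting from any such state is constant, robustly against every strategy pair. This has to be realized through the coupling of the two players' transitions and the payoff function rather than through a true absorbing state on either side; I expect it to be achievable by letting both kernels be deterministic switches on the anti-diagonal and setting $u$ so that the two states cycled through carry the same constant payoff, making the region behave as a single absorbing point from the point of view of $v_\lambda$. Once this invariance is in place, the reduction to Renault's game is essentially formal and the non-existence of the asymptotic value follows.
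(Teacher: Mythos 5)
Your overall strategy --- building a product game whose Shapley equations collapse onto those of the perfect-information counterexample of \cite{renault:hal-02130451}, with joint-state regions that are ``absorbing in value'' even though no state of either component is absorbing --- is exactly the strategy of the paper. However, your concrete $2\times 2$ construction cannot be made to work, for two structural reasons. First, Renault's game has two absorbing states with \emph{distinct} payoffs $0$ and $1$, and the oscillation mechanism rests on player~1 risking absorption at payoff $0$ while player~2 risks absorption at payoff $1$. Your anti-diagonal, on which both kernels are deterministic switches so that $(x_1,y_2)$ and $(x_2,y_1)$ cycle into each other with a common constant payoff, is a single recurrent class carrying a single invariant discounted value; it can play the role of only one of $0^\ast$, $1^\ast$, not both. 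If instead you tried to make $(x_1,y_2)$ and $(x_2,y_1)$ two separate invariant regions with values $0$ and $1$, each would have to be a fixed point of the joint dynamics under every action pair, which forces both marginal kernels to be constant there and contradicts weak communication (and the maximizer would in any case move his own component to escape the value-$0$ state if he could move at all). Second, the product structure is incompatible with your diagonal/anti-diagonal dictionary: in Renault's game player~1 alone moves the state from the non-absorbing state $0$ to the non-absorbing state $1$, but in a $2\times 2$ product game a unilateral switch by player~1 from a diagonal joint state necessarily lands on the anti-diagonal, i.e., in what you have declared to be the absorbing region. ``Transit to the other non-absorbing state'' and ``absorb'' are the same event in your encoding.

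The paper circumvents both obstructions by taking each component to be $\{x,y\}\times\Z/8\Z$ and basing the payoff on the circle distance between the players: the regions ``distance at least $3$'' and ``distance at most $1$'' play the roles of $1^\ast$ and $0^\ast$ because the favored player can mirror the opponent's moves to keep the distance there forever, each player retains full control of his own circle and of his own $\{x,y\}$ coordinate (hence weak communication), and the $\alpha^2$-probability of stepping backwards on the circle supplies the absorbing risk, while the extra $\{x,y\}$ coordinates force mixing at distance $2$. Even then, the reduction to Renault's equations is not ``essentially formal'': it requires proving that certain actions are dominant (\cref{lem_dominant}, eighteen case checks) before \cref{prop_equationsknown} delivers the two scalar Shapley equations. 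You would need to redesign the state space and payoff along these lines before the second half of your argument can start.
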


Remark that it is sufficient that one player has the strong communication property for the uniform value to exist, while both players having the weak communicating property does not ensure the existence of the asymptotic value.

The proof of \cref{THM_STRONG} relies on a classification of the state space of the player who is not assumed to have the strong communication property. We then consider a family of auxiliary stochastic games and prove that they have a uniform value independent of the initial state. Finally we build an auxiliary MDP for the player who is not assumed to have the strong communication property, whose payoffs are the uniform values of the previous auxiliary games. We conclude by proving that the uniform value of the MDP is also the uniform value of the initial game.

The proof of \cref{THM_WEAK} relies on a counterexample. We show that the Shapley equations of this game are also the Shapley equations of a simpler game with two absorbing and two non absorbing states, for which the asymptotic value does not exist.

\section{Proof of \texorpdfstring{Theorem~\ref{THM_STRONG}}{Theorem~\ref{THM_STRONG}}}
\label{sec_proofthstrong}

We assume in the proof of \cref{THM_STRONG} that player 1 has the strong communication property.

In \cref{subsec_classificationstates} we decompose the state space of player 2 in $L$ maximal communicating sets $C_i$ (\cref{def_stationaryrecurrentclass}). These sets, forming a partition of $Y$ (\cref{prop_partition}), allow us in \cref{subsec_auxiliarygamesrecurrentclass} to introduce $L$ auxiliary stochastic games $\Gamma_i$ having uniform values $v^i_\infty$ which do not depend on the initial state (\cref{prop_unif_Gamma_i}).

In \cref{subsec_auxiliaryMDP} we introduce a Markov Decision Process $\mathcal{G}$ for player 2 with the same transition probability $q$ as in the initial stochastic game, but with payoff $v^i_\infty$ when the state is in $C_i$. This MDP has a uniform value $w_\infty$, which is also the uniform value of the initial stochastic game $\Gamma$.

In \cref{subsec_P2unifvalue} we prove that player 2 uniformly guarantees $w_\infty$ in $\Gamma$ (\cref{prop_player2guarantees}). The idea is for player 2 to first play optimally in the MDP $\mathcal{G}$ (disregarding player 1) and then to switch to an optimal strategy in one of the games $\Gamma_i$.

Finally in \cref{subsec_P1unifvalue} we prove that player 1 uniformly guarantees $w_\infty$ (\cref{prop_player1guarantees}). This is one of the main difficulties of the proof. The natural idea is for player 1 to play optimally in each game $\Gamma_i$. However player 1 does not control the transitions from one $C_i$ to another (which may happen infinitely often). We deal with this issue by letting player 2 play a Markov periodic best response which prevents the state from jumping infinitely often between maximal communicating sets (\cref{lem_player1guarantee}). We conclude showing that the payoff this yields can also be obtained by player 2 as a limit payoff in $\mathcal{G}$ (\cref{prop_player1guarantees}).

\subsection{Classification of states}
\label{subsec_classificationstates}
The state space $Y$ is classified in a similar way to \citep{rossvaradarajan1991}, using recurrent classes induced by stationary policies on $Y$.

\begin{definition}
\label{def_stationaryrecurrentclass}
A subset $C$ of $Y$ is said to be a maximal communicating set if
\begin{itemize}
\item[i)] There exists a stationary policy on $Y$ such that $C$ is a recurrent class of the induced Markov chain on $Y$. Such a policy is said to be a stationary policy associated to $C$.
\item[ii)] $C$ is maximal, i.e., if there exists $C'$ a subset of $Y$ such that i) holds for $C'$ and $C\subseteq C'$, then $C'=C$.
\end{itemize}
\end{definition}

Let $C_1,\dots,C_L$ denote the maximal communicating sets.
$D$ denotes the set of transient states under every stationary policy (sometimes just called transient states for short).

\begin{remark}
Player 1 having the strong communication property, there is only one maximal communicating set in $X$, which is the whole state space $X$ itself.
\end{remark}

\cref{prop_partition} below can be found in \citep{rossvaradarajan1991} when the action space $B$ is finite, since the proof is similar with a compact action space we omit it. This result, showing that the maximal communicating sets and the set of states that are transient under any stationary policy form partition of $Y$, is fundamental in our proof of \cref{THM_STRONG} since it allows us (in the next section) to consider independent auxiliary games over each maximal communicating set.

\begin{proposition}
\label{prop_partition}
$\{C_1,\dots,C_L,D\}$ is a partition of $Y$.
\end{proposition}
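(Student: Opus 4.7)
The goal is to establish (a) $Y = D \cup \bigcup_{i=1}^{L} C_i$ and (b) the sets $D, C_1, \ldots, C_L$ are pairwise disjoint.

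For (a), fix $y \in Y$. Either every stationary policy renders $y$ transient, in which case $y \in D$ by definition, or there exists $\nu \in \Delta(B)^Y$ under which $y$ is recurrent. In the latter case, the recurrent class of $\nu$ containing $y$ is a communicating set in the sense of condition (i) of \cref{def_stationaryrecurrentclass}. Finiteness of $Y$ allows enlarging it to a maximal such set, which by definition is some $C_i$ containing $y$. For (b), disjointness of $D$ from each $C_i$ is immediate: membership in $C_i$ provides a stationary policy making $y$ recurrent, contradicting $y \in D$.

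The main step is pairwise disjointness of the $C_i$'s, which I would argue by contradiction. Assume $y_0 \in C_i \cap C_j$ with $i \neq j$, and let $\nu_i,\nu_j \in \Delta(B)^Y$ be stationary policies associated to $C_i$ and $C_j$ as in \cref{def_stationaryrecurrentclass}(i). Define a stationary policy $\nu \in \Delta(B)^Y$ by
\[
\nu(y) = \begin{cases} \nu_i(y) & \text{if } y \in C_i \setminus C_j, \\ \nu_j(y) & \text{if } y \in C_j \setminus C_i, \\ \tfrac{1}{2}\nu_i(y) + \tfrac{1}{2}\nu_j(y) & \text{if } y \in C_i \cap C_j, \end{cases}
\]
extended arbitrarily off $C_i \cup C_j$ (the extension plays no role). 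Since $C_i$ is closed under $\nu_i$ and $C_j$ is closed under $\nu_j$, the set $C_i \cup C_j$ is closed under $\nu$. Moreover, every state of $C_i \cup C_j$ communicates with $y_0$ under $\nu$: from any $y \in C_i$, a $\nu_i$-path from $y$ to $y_0$ has positive probability under $\nu$ (since $\nu$ coincides with $\nu_i$ on $C_i \setminus C_j$ and places weight $1/2$ on $\nu_i$ at states of $C_i \cap C_j$), and symmetrically from any $y \in C_j$. Therefore $C_i \cup C_j$ is a single recurrent class of $\nu$, hence a communicating set, and maximality of $C_i$ and $C_j$ yields $C_i = C_i \cup C_j = C_j$, a contradiction.

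The only delicate point is this convex-combination construction. Compared with the finite-action setting of \cite{rossvaradarajan1991}, one works with policies valued in the convex set $\Delta(B)$; since transition probabilities depend linearly on mixed actions, convex combinations of stationary policies remain stationary policies, so the argument carries through without modification.
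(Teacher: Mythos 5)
Your proof is correct. The paper itself omits the argument, deferring to \cite{rossvaradarajan1991} for the finite-action case and asserting that the compact case is similar, so you are supplying a proof the reader is left to reconstruct. Your coverage step (a) and the disjointness of $D$ from the $C_i$'s are routine and handled properly; the substantive step is the pairwise disjointness of the $C_i$'s, and your convex-combination policy $\nu$ does the job: since $q(\cdot\mid y,\mu)=\int_B q(\cdot\mid y,b)\,d\mu(b)$ is affine in the mixed action $\mu\in\Delta(B)$, the kernel induced by $\tfrac12\nu_i(y)+\tfrac12\nu_j(y)$ is the corresponding mixture of kernels, so $C_i\cup C_j$ is closed and every $\nu_i$- or $\nu_j$-path survives with at least a factor $(1/2)^{\text{length}}$ of its original probability, making $C_i\cup C_j$ a closed communicating (hence, by finiteness of $Y$, recurrent) class and contradicting maximality. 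This is precisely the point where compactness of $B$ could have caused trouble in a naive transcription of the finite-action argument, and you correctly identify that only convexity of $\Delta(B)$ and linearity of the transition in the mixed action are needed. It is worth noting that the paper uses exactly the same averaging device in its proof of \cref{prop_jointscc} (the policy $\nu(y)=\frac{1}{|C_y|}\sum_{x\in C_y}\bnu(x,y)$), so your construction is consistent in spirit with the techniques the author does spell out. Two points you state tersely but which are easily completed: communication with $y_0$ requires paths in both directions (both exist inside $C_i$ under $\nu_i$ since $C_i$ is a recurrent class, and both survive under $\nu$), and the maximal element you extract in step (a) among communicating sets containing $y$ is in fact maximal among all communicating sets, since any strict superset satisfying condition (i) would still contain $y$.
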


The two examples given below allow to show how the state space of player 2 decomposes in maximal communicating sets, and what behavior the state process may have with regard to this decomposition. In particular, they show behaviors of the state process that may happen in the compact action setting and cannot happen in the finite one.

\begin{example}
\label{example_scc1}
In \cref{fig_exemple_classification} the state space of player 2 is $Y = \{x,y,z\}$ and the action space is $B = [0,1/2]$. Again, the transitions probabilities are represented on the arrows between states, e.g., playing $\beta\in B$ in state $y$, the next state is $x$ with probability $\beta$, $y$ with probability $1-\beta-\beta^2$ and $z$ with probability $\beta^2$. The maximal communicating sets are $\{y\}$ and $\{z\}$ and the set of transient states under every stationary policy is $\{x\}$.

\begin{figure}[ht]
\begin{center}
\begin{tikzpicture}[scale=1]
\node at (-3,0){$x$};
\node at (0,0){$y$};
\node at (3,0){$z$};

\draw (0,0) circle (0.2);
\draw (-3,0) circle (0.2);
\draw (3,0) circle (0.2);

\draw[->] (-2.8,-0.2) to [out=-45 ,in=-135] (-0.2,-0.2);
\draw[->] (-0.2, 0.2) to [out= 135 ,in= 45] ( -2.8, 0.2);
\draw[->] (0.3, 0) to [out= 0 ,in= 180] ( 2.7, 0);

\draw[->] (3.2,0.2) to [out=45,in=90] (4,0)
	to [out=-90,in=-45] (3.2,-0.2);
\draw[->] (-0.2,0.2) to [out=135,in=180] (0,0.8)
	to [out=0,in=45] (0.2,0.2);		

\node [below] at (-1.5,-0.8) {$1$};
\node [above] at (-1.5,0.8) {$\beta$};
\node [above] at (0,0.8) {$1-\beta-\beta^2$};
\node [above] at (1.5,0) {$\beta^2$};
\node [right] at (4,0) {$1$};

\end{tikzpicture}
\end{center}
\caption{States, actions and transitions of player 2}
\label{fig_exemple_classification}
\end{figure}
Remark that, if the initial state is $x$, playing $\beta = \frac{1}{2n}$ in state $y$ at stage $n\geq 1$, player 2 has a positive probability of switching infinitely often between the transient state $x$ and the maximal communicating set $\{y\}$.
This cannot happen when the action space is finite. In that case, for any policy of player 2, after finitely many stages the process $(Y_n)_{n\geq1}$ remains forever in one of the maximal communicating sets with probability $1$, see \citep[lemma 2 and proposition 2]{rossvaradarajan1991}.
\end{example}

The next example is derived from the previous one and shows another possible behavior of the state process in the compact action setting.

\begin{example}
Here, see \cref{fig_exemple_classification2}, the state space of player 2 is $Y = \{x,y,z\}$ and the action space is $B = [0,1/2]$. The maximal communicating sets are $\{x\}$, $\{y\}$ and $\{z\}$.

\begin{figure}[ht]
\begin{center}
\begin{tikzpicture}[scale=1]
\node at (-3,0){$x$};
\node at (0,0){$y$};
\node at (3,0){$z$};

\draw (0,0) circle (0.2);
\draw (-3,0) circle (0.2);
\draw (3,0) circle (0.2);

\draw[->] (-2.8,-0.2) to [out=-45 ,in=-135] (-0.2,-0.2);
\draw[->] (-0.2, 0.2) to [out= 135 ,in= 45] ( -2.8, 0.2);
\draw[->] (0.3, 0) to [out= 0 ,in= 180] ( 2.7, 0);

\draw[->] (-3.2,0.2) to [out=135,in=90] (-4,0)
	to [out=-90,in=-135] (-3.2,-0.2);
\draw[->] (3.2,0.2) to [out=45,in=90] (4,0)
	to [out=-90,in=-45] (3.2,-0.2);
\draw[->] (-0.2,0.2) to [out=135,in=180] (0,0.8)
	to [out=0,in=45] (0.2,0.2);		

\node [left] at (-4,0) {$1-2\alpha$};
\node [below] at (-1.5,-0.8) {$2\alpha$};
\node [above] at (-1.5,0.8) {$\beta$};
\node [above] at (0,0.8) {$1-\beta-\beta^2$};
\node [above] at (1.5,0) {$\beta^2$};
\node [right] at (4,0) {$1$};

\end{tikzpicture}
\end{center}
\caption{States, actions and transitions of player 2}
\label{fig_exemple_classification2}
\end{figure}
Again, remark that if the initial state is $x$, playing $\alpha = 1/2$ in state $x$ and $\beta = \frac{1}{2n}$ in state $y$ at stage $n\geq 1$, player 2 has a positive probability of switching infinitely often between maximal communicating sets $\{x\}$ and $\{y\}$.
\end{example}
As we mentioned earlier, this behavior the state of player 2 may have gives rise to a difficulty since player 1 cannot control the jumps of player's 2 state between maximal communicating sets or transient states. This difficulty is defused by letting the players play Markov periodic strategies. 

Another way of defining maximal communicating sets is via pairs of stationary strategies of player 1 and 2 on $X\times Y$, as in the following definition. Recall that a pair of stationary strategies $(\bmu,\bnu)\in \Delta(A)^{X\times Y}\times\Delta(B)^{X\times Y}$ induces a Markov chain over $X\times Y$, which may have recurrent classes or transient states.

\begin{definition}
A subset $C$ of $X\times Y$ is said to be a joint maximal communicating set if
\begin{itemize}
\item[i)] There exists a pair of stationary strategies in $\Delta(A)^{X\times Y}\times \Delta(B)^{X\times Y}$ such that $C$ is a recurrent class of the induced Markov chain on $X\times Y$.
\item[ii)] $C$ is maximal, i.e., if there exists $C'$ a subset of $X\times Y$ such that i) holds for $C'$ and $C\subseteq C'$, then $C'=C$. 
\end{itemize}
\end{definition}

Advantageously, maximal communicating sets and joint maximal communicating sets match in following sense.

\begin{proposition}
\label{prop_jointscc}
The joint maximal communicating sets are $X\times C_1,\dots,X \times C_L$, and the set of states that are transient under any pair $(\bmu,\bnu)$ of stationary strategies is $X\times D$.
\end{proposition}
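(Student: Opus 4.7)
The plan is to prove two inclusions that together with standard maximality arguments yield the proposition: first, each $X\times C_i$ is a recurrent class of the joint chain under some pair of stationary strategies; second, any recurrent class of the joint chain under any pair is contained in some $X\times C_i$. From these two facts, the maximality of $C_i$ in $Y$ transfers to the maximality of $X\times C_i$ in $X\times Y$ (any strict superset that were a joint recurrent class would, by the second fact, be contained in some $X\times C_j$, forcing $C_i\subsetneq C_j$). The characterization of the transient set follows immediately, since $(x,y)\in X\times D$ can never lie in any joint recurrent class (else the second fact would place $y$ in some $C_j$), while $(x,y)$ with $y\in C_i$ is recurrent under the pair built in the first step.

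For the first inclusion, let $\nu$ be a stationary policy on $Y$ for which $C_i$ is a recurrent class, and pick any stationary policy $\mu$ on $X$. Setting $\bmu(x,y)=\mu(x)$ and $\bnu(x,y)=\nu(y)$ decouples the joint chain into two independent Markov chains with kernels $P_\mu$ on $X$ and $P_\nu$ on $Y$. The key observation is that strong communication forces $P_\mu$ to be not only irreducible but also aperiodic: since $P_\mu^T$ has all entries strictly positive, $T$ is a return time to every $x$; moreover, $P_\mu^T(x,x)>0$ implies there exists $x'$ with $P_\mu(x',x)>0$, and combining with $P_\mu^T(x,x')>0$ yields $P_\mu^{T+1}(x,x)>0$. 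Since $\gcd(T,T+1)=1$, every state has period $1$. Standard product-chain reasoning then shows $X\times C_i$ is closed and irreducible in the joint chain, hence a recurrent class.

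For the second inclusion, given a recurrent class $R$ of the joint chain under $(\bmu,\bnu)$, the restriction to $R$ is a finite irreducible Markov chain with a unique invariant distribution $\pi$, which disintegrates as $\pi(x,y)=\pi_Y(y)\pi_{X|Y}(x|y)$. Define the averaged stationary $Y$-policy $\nu_C(y):=\int\bnu(\argdot|x,y)\,d\pi_{X|Y}(x|y)$ for $y\in\pi_Y(R)$ (and arbitrarily elsewhere). A direct computation using the invariance of $\pi$ under the joint kernel shows that $\pi_Y$ is invariant for the $Y$-chain induced by $\nu_C$, so $\pi_Y(R)$ is closed under $\nu_C$. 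Irreducibility is the delicate point and is the main obstacle: any positive-probability joint-chain path $(x_0,y_0)\to\cdots\to(x_n,y_n)$ staying in $R$ projects to a positive-probability $\nu_C$-path, since at each step
\[
P^{\nu_C}(y_{k+1}|y_k)\geq \pi_{X|Y}(x_k|y_k)\int q(y_{k+1}|y_k,b)\,d\bnu(b|x_k,y_k)>0,
\]
using that $(x_k,y_k)\in R$ implies $\pi_{X|Y}(x_k|y_k)>0$. Thus $\pi_Y(R)$ is a recurrent class of the $\nu_C$-chain, and is therefore contained in some $C_i$, giving $R\subseteq X\times C_i$ as required.
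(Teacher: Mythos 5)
Your proof is correct and follows essentially the same route as the paper's: a pair of product stationary strategies makes each $X\times C_i$ jointly recurrent, and an $x$-averaged $Y$-policy pushes any joint recurrent class down to a closed irreducible set in $Y$, hence into some $C_i$. Two places where you are more careful than the paper: you actually verify that the $X$-chain is aperiodic under any stationary policy (so that the product chain is irreducible on all of $X\times C_i$ rather than splitting along periodicity classes --- a step the paper asserts without comment), and you average $\bnu(\argdot|x,y)$ with the invariant-measure weights $\pi_{X|Y}(x|y)$ rather than uniformly over the fiber $C_y$ as the paper does; both choices work, since all that is needed is strictly positive weight on every $x$ in the fiber. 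You also prove containment in some $X\times C_i$ for an arbitrary joint recurrent class, not only for supersets of a given $X\times C_i$, which is the cleaner way to justify that $X\times D$ is exactly the jointly transient set.
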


\begin{proof}
Let $i\in \{1,\dots,L\}$. $X\times C_i$ is a recurrent class in $X\times Y$ for some pair of stationary policies of player 1 on $X$ and player 2 on $Y$ (just take the stationary policy of player 2 on $Y$ associated to $C_i$ and any stationary policy of player 1 on $X$).

Let us show that $X\times C_i$ is maximal. Suppose that there exists a subset $C$ of $X\times Y$ such that $C$ is a recurrent class under a pair of stationary strategies $(\bmu,\bnu)\in \Delta(A)^{X\times Y}\times\Delta(B)^{X\times Y}$ and $X\times C_i \subseteq C$.

Let $C' = \{y\in Y \ | \ \exists x\in X \ (x,y)\in C\}$ be the projection of $C$ over $Y$.
For all $y\in C'$, define $C_y = \{x\in X \ | \ (x,y)\in C\}$.
Finally, define $\nu\in\Delta(B)^Y$ by, for all $y\in C'$
\[\nu(y)=\frac{1}{|C_y|}\sum_{x\in C_y} \bnu(x,y),\]
and arbitrarily outside $C'$.

$C'$ is closed under $\nu$ and every state in $C'$ is accessible from any other state. Hence $C'$ is a recurrent class. Thus $C' \subseteq C_i$, and $C = X\times C_i$.

Clearly $X\times D = X\times Y \setminus \left(\bigcup\limits_{i=1}^L X\times C_i\right)$ is the set of transient states.
\end{proof}

\subsection{The auxiliary games over each maximal communicating set}
\label{subsec_auxiliarygamesrecurrentclass}

Consider now a family of $L$ zero-sum product stochastic games $(\Gamma_i)_{i\in\{1,\dots,L\}}$. For all $i \in \{1,\dots,L\}$, if $y\in C_i$, define the set of actions of player 2 at state $y$ under which the state has probability $1$ of staying in $C_i$, \[B_y = \{b\in B \ | \ q(C_i|y,b)=1\}.\]
We remark that the set $B_y$ is closed.
Then the game $\Gamma_i$ is given by \[\Gamma_i = (X,C_i,A,(B_y)_{y\in C_i},p,q,u).\]
The sets of strategies for player 1 and player 2 in $\Gamma_i$ are respectively denoted $\bScal_i$ and $\bTcal_i$.
The value of the $N$-stage game starting at $(x,y)\in X\times C_i$, is denoted $v_N^i(x,y)$.

In \cref{lem_constantvalue} and \cref{prop_unif_Gamma_i} below we prove that each of these games have a uniform value that does not depend on the initial state of the game in $X\times C_i$.

\begin{lemma}
\label{lem_constantvalue}
Let $v^i:X\times C_i \to [0,1]$ be any uniform limit point of the sequence $(v_N^i)_{N\geq 1}$. Then $v^i$ is constant over $X\times C_i$.
\end{lemma}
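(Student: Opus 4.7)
The plan is to pass to the limit in the Shapley equation for $v_N^i$ and then combine the resulting invariance of $v^i$ with the ergodic structure of $\Gamma_i$ on $X\times C_i$. Since $X\times C_i$ is finite and $v_{N_k}^i\to v^i$ uniformly, passing to the limit in the Shapley recursion
\[
v^i_{N_k+1}(x,y)=\val_{\mu,\nu}\Bigl[\tfrac{1}{N_k+1}u(x,y,\mu,\nu)+\tfrac{N_k}{N_k+1}\E^{x,y}_{\mu,\nu}(v^i_{N_k})\Bigr]
\]
(in which the immediate-payoff term is $O(1/N_k)$ and the continuation term converges uniformly in $(\mu,\nu)$) yields, on $X\times C_i$, the limit equation
\[
v^i(x,y)=\val_{\mu\in\Delta(A),\,\nu\in\Delta(B_y)}\E^{x,y}_{\mu,\nu}(v^i),
\]
with both val's attained by compactness of $\Delta(A)$ and $\Delta(B_y)$ (recall $B_y$ is closed).

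Let $M:=\max_{X\times C_i}v^i$ and $S:=\{(x,y)\in X\times C_i:v^i(x,y)=M\}$. For each $(x,y)\in S$, pick $\mu^*_{x,y}\in\Delta(A)$ realizing $\min_\nu\E^{x,y}_{\mu^*_{x,y},\nu}(v^i)=M$; since $v^i\le M$ on $X\times C_i$, the equality $\E^{x,y}_{\mu^*_{x,y},\nu}(v^i)=M$ must actually hold for every $\nu\in\Delta(B_y)$, so the support of the next-state distribution under $(\mu^*_{x,y},\nu)$ lies in $S$. Defining the stationary strategy $\bmu^*\in\Delta(A)^{X\times C_i}$ by $\bmu^*(x,y):=\mu^*_{x,y}$ on $S$ (arbitrarily outside), the joint chain under $\bmu^*$ against any strategy of player $2$ remains in $S$ forever once started in $S$.

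To conclude that $S=X\times C_i$, pair $\bmu^*$ with a stationary policy $\nu^i\in\Delta(B)^Y$ associated to $C_i$ (see \cref{def_stationaryrecurrentclass}), under which the autonomous $Y$-chain is irreducible on $C_i$. A shift argument first extends \cref{def_stronglycommunicating} from stage $T$ to every stage $n\ge T$: conditioning on $X_{n-T}=x''$ and the history at stage $n-T$, the tail of any $\sigma\in\Scal$ is itself in $\Scal$, so strong communication gives $\Proba(X_n=x'\mid X_{n-T}=x'',\cdot)>0$, and averaging over $x''$ yields $\Proba^x_\sigma(X_n=x')>0$ for all $\sigma\in\Scal$, $x,x'\in X$ and $n\ge T$. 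Conditional on any realization of the $Y$-path, $\bmu^*$ becomes a time-dependent Markov policy on $X$, hence an element of $\Scal$, so $\Proba(X_n=x'\mid Y\text{-path})>0$ for every $x'\in X$ and every $n\ge T$. Irreducibility of the $Y$-chain under $\nu^i$, combined with recurrence, ensures that every $y'\in C_i$ lies in the support of $Y_n$ from $y$ at some $n\ge T$. Hence $\Proba^{x,y}_{\bmu^*,\nu^i}((X_n,Y_n)=(x',y'))>0$ for every $(x',y')\in X\times C_i$ at some $n\ge T$, and since the chain never leaves $S$, we conclude $S=X\times C_i$, i.e.\ $v^i\equiv M$. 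The delicate point is this joint-reachability step: marginal $X$- and $Y$-reachability are available respectively at time $T$ and at times tied to the period of the $\nu^i$-chain, and these would not in general align, but the extension of strong communication to all $n\ge T$ defuses this obstacle.
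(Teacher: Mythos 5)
Your proof is correct and follows essentially the same route as the paper's: pass to the limit in the Shapley recursion, note that at a maximizer of $v^i$ an optimal mixed action of player 1 forces the entire support of the next state to stay in the argmax set, and then combine the strong communication property on $X$ with the recurrence of $C_i$ under an associated stationary policy to reach any $(x',y')$ at a common time $n\ge T$. The paper carries this out as an explicit induction along a fixed $Y$-path, constructing actions $\mu_1^\ast,\dots,\mu_{t-1}^\ast$ step by step, whereas you package the same idea as ``the argmax set is absorbing under $\bmu^\ast$'' followed by a joint-reachability argument; the time-alignment issue you flag as delicate is handled identically in the paper, whose statement \cref{ali_strongcom} is precisely your shift extension of \cref{def_stronglycommunicating} to all stages $t\ge T$.
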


\begin{proof}
Let $i\in \{1,\dots,L\}$ and $(x_1^\ast,y_1^\ast)\in \argmax_{X\times C_i} v^i(\cdot)$. Since player 1 has the strong communication property
\begin{align}
\label{ali_strongcom}
\forall t\geq T \ \forall x'\in X \ \forall\sigma\in\Scal \ \Proba_\sigma^{x_1^\ast}(X_T=x')>0,
\end{align}
and since $C_i$ is a maximal communicating set
\begin{align}
\label{ali_scc}
\exists \nu\in \Delta(B)^Y \ \forall y'\in C_i \ \exists t\geq T \ \Proba_{\nu}^{y_1^\ast}(Y_t=y')>0.
\end{align}
We now fix $(x',y')\in X\times C_i$ and let $\nu\in\Delta(B)^Y$ and $t\geq T$ be as in statement (\ref{ali_scc}). We denote $(x_t^\ast,y_t^\ast)=(x',y')$. Then there exist $y_2^\ast,...,y_{t-1}^\ast\in C_i$ such that \[q(y_t^\ast|y_{t-1}^\ast,\nu(y_{t-1}^\ast))\dots q(y_2^\ast|y_1^\ast,\nu(y_1^\ast))>0.\]
Passing to the limit (uniformly) in the Shapley equation (\cref{eq_shapley_GammaN}) one has
\begin{align*}
\label{eq_shapleylimit}
v^i(x_1^\ast,y_1^\ast) &= \max_{\mu_1\in\Delta(A)}\min_{\nu_1\in\Delta(B_{y_1^\ast})} \sum\limits_{\substack{x_2\in X\\ y_2\in C_i}} p(x_2|x_1^\ast,\mu_1) q(y_2|y_1^\ast,\nu_1) v^i(x_2,y_2).
\end{align*}
Hence
\begin{align*}
v^i(x_1^\ast,y_1^\ast) &\leq \max_{\mu_1\in\Delta(A)}\sum\limits_{\substack{x_2\in X\\ y_2\in C_i}} p(x_2|x_1^\ast,\mu_1) q(y_2|y_1^\ast,\nu(y_1^\ast)) v^i(x_2,y_2) \leq v^i(x_1^\ast,y_1^\ast).
\end{align*}
Thus there exists $\mu_1^\ast\in\Delta(A)$ such that
\begin{align*}
v^i(x_1^\ast,y_1^\ast) &= \sum\limits_{\substack{x_2\in X\\ y_2\in C_i}} p(x_2|x_1^\ast,\mu_1^\ast) q(y_2|y_1^\ast,\nu(y_1^\ast))v^i(x_2,y_2).
\end{align*}
Finally there exists $\mu_1^\ast\in\Delta(A)$ such that
\begin{align*}
v^i(x_1^\ast,y_1^\ast) &= \sum\limits_{x_2\in X} p(x_2|x_1^\ast,\mu_1^\ast)v^i(x_2,y_2^\ast).
\end{align*}
Let $k\in\{2,\dots,t-1\}$ and suppose there exist $\mu_1^\ast\in\Delta(A)$ and $\mu_2^\ast,\dots,\mu_{k-1}^\ast\in\Delta(A)^X$ such that
\begin{align*}
v^i(x_1^\ast,y_1^\ast) &= \sum\limits_{x_2,\dots,x_{k-1}\in X}p(x_{k-1}|x_{k-2},\mu_{k-2}^\ast)\dots p(x_2|x_1^\ast,\mu_1^\ast) v^i(x_{k-1},y_{k-1}^\ast).
\end{align*}
Then there exist $\mu_1^\ast\in\Delta(A)$ and $\mu_2^\ast,\dots,\mu_{k-1}^\ast\in\Delta(A)^X$ such that
\begin{align*}
v^i(x_1^\ast,y_1^\ast) &\leq \sum\limits_{x_2,\dots,x_{k-1}\in X} p(x_{k-1}|x_{k-2},\mu_{k-2}^\ast)\dots p(x_2|x_1^\ast,\mu_1^\ast)\\
&\max_{\mu_{k-1}\in\Delta(A)}\min_{\nu_{k-1}\in\Delta(B_{y_{k-1}^\ast})} \sum\limits_{\substack{x_k\in X\\ y_k\in C_i}} p(x_k|x_{k-1},\mu_{k-1}^\ast) q(y_k|y_{k-1}^\ast,\nu_{k-1})v^i(x_k,y_k)\\
&\leq \sum\limits_{x_2,\dots,x_{k-1}\in X} p(x_{k-1}|x_{k-2},\mu_{k-2}^\ast)\dots p(x_2|x_1^\ast,\mu_1^\ast)\\
&\max_{\mu_{k-1}\in\Delta(A)} \sum\limits_{\substack{x_k\in X\\ y_k\in C_i}} p(x_k|x_{k-1},\mu_{k-1}^\ast) q(y_k|y_{k-1}^\ast,\nu(y_{k-1}^\ast))v^i(x_k,y_k)\\
&\leq v^i(x_1^\ast,y_1^\ast).
\end{align*}
Thus there exists $\mu_k^\ast\in\Delta(A)^X$ such that
\begin{align*}
v^i(x_1^\ast,y_1^\ast) &= \sum\limits_{x_2,\dots,x_{k-1}\in X} p(x_{k-1}|x_{k-2},\mu_{k-2}^\ast)\dots p(x_2|x_1^\ast,\mu_1^\ast)\\&
\sum\limits_{\substack{x_k\in X\\ y_k\in C_i}} p(x_k|x_{k-1},\mu_{k-1}^\ast) q(y_k|y_{k-1}^\ast,\nu(y_{k-1}^\ast))v^i(x_k,y_k).
\end{align*}
Finally,
\begin{align*}
v^i(x_1^\ast,y_1^\ast) = \sum\limits_{x_2,\dots,x_{k-1},x_k\in X} p(x_k|x_{k-1},\mu_{k-1}^\ast) &p(x_{k-1}|x_{k-2},\mu_{k-2}^\ast)\dots\\& p(x_2|x_1^\ast,\mu_1^\ast) v^i(x_k,y_k^\ast).
\end{align*}
Thus by induction, there exist $\mu_1^\ast\in\Delta(A)$ and $\mu_2^\ast,\dots,\mu_{t-1}^\ast\in\Delta(A)^X$ such that for all $x_2,\dots,x_{t-1}\in X$ \[0 = p(x_t^\ast|x_{t-1},\mu_{k-1}^\ast (x_{t-1})) \dots p(x_3|x_2,\mu_2^\ast(x_2))p(x_2|x_1^\ast,\mu_1^\ast) (v^i(x_t^\ast,y_t^\ast)-v^i(x_1^\ast,y_1^\ast)).\]
Let us define the policy $\sigma\in\Scal$ by $\sigma_1(x_1^\ast) = \mu_1^\ast$ and for all $k\in \{2,\dots,t-1\}$ $\sigma_k(h^1_k) = \mu^\ast_k(x_k)$. Finally $\sigma$ is defined arbitrarily for all $k\geq t$.
We have from statement (\ref{ali_strongcom}) that there exists $x_2^\ast,\dots,x_{t-1}^\ast\in X$ such that 
\[p(x_t^\ast|x_{t-1}^\ast,\mu_{t-1}^\ast)\dots p(x_2^\ast|x_1^\ast,\mu_1^\ast)>0.\]
Finally, one has $v^i(x_t^\ast,y_t^\ast) = v^i(x_1^\ast,y_1^\ast).$
\end{proof}

Since any uniform limit point of $(v_N^i)_{N\geq 1}$ is constant over $X\times C_i$, we show that players guarantee $\limsup_{N\to+\infty} v^i_N$ by playing an $\varepsilon$-optimal Markov $N_0$-periodic strategy for a sufficiently large $N_0$. 

\begin{proposition}
\label{prop_unif_Gamma_i}
For all $i\in\{1,\dots,L\}$ the game $\Gamma_i$ has a uniform value $v_\infty^i$, which is constant over $X\times C_i$. 

Moreover, for all $\varepsilon>0$ there exists $N_0\in\N^\ast$ such that both players have an $\varepsilon$-optimal Markov $N_0$-periodic strategy in each $\Gamma_i$.
\end{proposition}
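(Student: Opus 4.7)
The plan is to exploit the fact that $X\times C_i$ is finite, so $(v_N^i)_{N\geq 1}$ lies in the compact cube $[0,1]^{X\times C_i}$ and admits limit points; by \cref{lem_constantvalue} every such limit point is a constant function on $X\times C_i$. My goal is to show that each such limiting constant is uniformly guaranteed by both players via a Markov periodic ``block'' strategy. This will simultaneously establish existence of the uniform value, constancy on $X\times C_i$, and the existence of $\varepsilon$-optimal Markov $N_0$-periodic strategies.

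Fix $i\in\{1,\dots,L\}$, let $v_\infty^i$ be a limit point of $(v_N^i)$, and let $\varepsilon>0$. I would first pick $N_0$ along the subsequence so that $\max_{(x,y)\in X\times C_i}\lvert v_{N_0}^i(x,y)-v_\infty^i\rvert<\varepsilon/2$, and invoke \cref{eq_shapley_GammaN} together with \citep[proposition 5.3]{Sor:Springer2002} to get an optimal Markov strategy $\bsigma^\ast$ for player 1 in the $N_0$-stage version of $\Gamma_i$, so that $\bsigma_k^\ast$ depends only on $(x_k,y_k)$. I would then define the Markov $N_0$-periodic strategy $\bsigma$ that replays $\bsigma^\ast$ over each successive block of $N_0$ stages: at stage $n=qN_0+k$ with $1\leq k\leq N_0$, play $\bsigma_k^\ast(x_n,y_n)$. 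Because in $\Gamma_i$ the actions of player 2 at $y\in C_i$ are restricted to $B_y$, the state process remains in $X\times C_i$ throughout.

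The key estimate compares each block payoff to $v_{N_0}^i$. Conditioning on the history at the start of the $q$-th block, the continuation play of player 1 is, by the Markov character of $\bsigma^\ast$, exactly $\bsigma^\ast$ restarted from the current state, while the continuation of player 2's strategy still defines a valid player 2 strategy in the $N_0$-stage subgame. Optimality of $\bsigma^\ast$ then gives
\[
\E_{\bsigma,\btau}^{x,y}\!\left[\sum_{n=qN_0+1}^{(q+1)N_0} u_n \,\Big|\, \mathcal{H}_{qN_0+1}\right]\geq N_0\cdot v_{N_0}^i(X_{qN_0+1},Y_{qN_0+1})\geq N_0\bigl(v_\infty^i-\varepsilon/2\bigr).
\]
Summing over the complete blocks in an $N$-stage game and controlling the residual $r<N_0$ stages by the trivial bound $u\in[0,1]$ yields $\gamma_N(\bsigma,\btau)(x,y)\geq v_\infty^i-\varepsilon$ for all $N$ large enough, uniformly in $\btau\in\bTcal_i$. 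A symmetric construction handles player 2. Consequently every limit point of $(v_N^i)$ is uniformly guaranteed by both players, hence equals the uniform value; the full sequence then converges to this common constant, and the block strategies are the claimed $\varepsilon$-optimal Markov $N_0$-periodic strategies.

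To obtain a single $N_0$ that works for every $i$ as required by the statement, I would take a common multiple of the $N_0^{(i)}$ produced game by game, noting that any $N_0^{(i)}$-periodic strategy is automatically $N_0$-periodic. The main obstacle I anticipate is the conditional-expectation step on each block: it requires a careful use of the Markov character of $\bsigma^\ast$ together with the observation that player 2's (possibly history-dependent) strategy, when conditioned on the history at the start of a block, still defines a bona fide player 2 strategy in the $N_0$-stage continuation subgame, against which the optimality guarantee of $\bsigma^\ast$ can be invoked.
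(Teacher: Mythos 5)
Your proposal is correct and follows essentially the same route as the paper: approximate the constant limit of $(v_N^i)$ by $v_{N_0}^i$, repeat an optimal Markov strategy of the $N_0$-stage game periodically, bound each block's conditional payoff by $N_0(v_\infty^i-\varepsilon)$, absorb the residual stages, and take a common multiple of the $N_0^{(i)}$'s. The only cosmetic difference is that you work with an arbitrary limit point and deduce convergence of the whole sequence from both players guaranteeing it, whereas the paper fixes $v^i=\limsup_N v_N^i$ from the outset; the substance is identical.
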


\begin{proof}
Here we take $v^i = \limsup_{N\to+\infty} v^i_N$. Let $\varepsilon>0$. Thanks to \cref{lem_constantvalue}, there exists $N^i_0\geq 1$ such that for all $(x,y)\in X\times C_i$ one has $|v^i_{N^i_0}(x,y)-v^i|\leq\varepsilon$.
Hence \[\max_{\bsigma\in\bScal_i}\min_{\btau\in\bTcal_i}\E_{\bsigma,\btau}^{x,y}\left[\sum_{n=1}^{N^i_0}u_n\right] \geq N^i_0(v^i-\varepsilon).\]
Recall that there exists a Markov strategy $\bsigma^i = (\bsigma^i_n)_{n\in[1,N^i_0]}$, independent of the initial state $(x,y)$, such that for all $\tau\in\Tcal_i$ \[\E_{\bsigma^i,\btau}^{x,y}\left[\sum_{n=1}^{N^i_0}u_n\right] \geq N^i_0(v^i-\varepsilon).\]
Let $N=p N^i_0 + r$, $p\geq 1$ and $r\leq N^i_0-1$ be two integers. The following Markov $N_0$-periodic strategy is still denoted $\bsigma^i$ : at stage $n\geq 1$, in state $(x,y)\in X\times C_i$, play $\sigma^i_{n'}(x,y)$, where $n'$ equals $n$ modulo $N_0^i$. Then for all $\btau\in\Tcal_i$
\begin{align*}
\E_{\bsigma^i,\btau}^{x,y}\left[\sum_{n=1}^{N} u_n\right] &= \sum_{k=0}^{p-1}\E_{\bsigma^i,\btau}^{x,y}\left[\sum_{n=kN^i_0+1}^{(k+1)N^i_0}u_n\right] + \E_{\bsigma^i,\btau}^{x,y}\left[\sum_{n=pN^i_0+1}^{N}u_n\right]\\
&\geq pN^i_0(v^i-\varepsilon).
\end{align*}
Finally, for all $N\geq \frac{N^i_0}{\varepsilon}$,
\[v^i_N(x,y)\geq \frac{pN^i_0(v^i-\varepsilon)}{N}\geq (1-\varepsilon)(v^i-\varepsilon).\]
That is, player 1 uniformly guarantees $v^i$ and $\bsigma^i$ is an $\varepsilon$-optimal Markov $N_0^i$-periodic strategy. A similar proof shows that player 2 also uniformly guarantees $v^i$ and has an $\varepsilon$-optimal Markov $N_0^i$-periodic strategy $\tau^i$.

Note that $N_0^i$ can be taken uniformly over $\{1,\dots,L\}$ by setting $N_0=\prod_{i=1}^L N_0^i.$
\end{proof}

\subsection{The auxiliary Markov decision process \texorpdfstring{$\mathcal{G}$}{G}}
\label{subsec_auxiliaryMDP}

Consider the Markov decision process $\mathcal{G} = (Y,B,q,g),$ in which player 2 is the only decision maker and his aim is to minimize
\[\begin{array}{ccccl}
g & : & Y & \to & [0,1] \\
 & & y & \mapsto & \begin{cases} v_\infty^i \text{ if there exists } i\in\{1,\dots,L\} \text{ such that } y\in C_i\\ 
1/2 \text{ if } y\in D.\end{cases}\\
\end{array}\]

Recall, see \citep{Sor:Springer2002}, that, $\mathcal{G}$ has a uniform value $w_\infty\in [0,1]^Y$ and that for every $\varepsilon>0$, player 2 has an $\varepsilon$-optimal stationary policy.

The interpretation is the following. The objective of player 2 is to reach the maximal communicating set $C_i$ with corresponding auxiliary game $\Gamma_i$ having the lowest uniform value $v_\infty^i$ possible, and stay in $C_i$. Note that the payoff of $1/2$ in $D$ is arbitrary and does not change the value of $w_\infty$.

We will prove that the uniform value $w_\infty$ of the MDP $\mathcal{G}$ is in fact also the uniform value of the initial game $\Gamma$.

\subsection{Player 2 uniformly guarantees \texorpdfstring{$w_\infty$}{w} in \texorpdfstring{$\Gamma$}{Gamma}}
\label{subsec_P2unifvalue}

\begin{proposition}
\label{prop_player2guarantees}
Player 2 uniformly guarantees $w_\infty$ in $\Gamma$.
\end{proposition}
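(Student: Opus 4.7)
The plan is exactly the two-phase strategy indicated in the overview of \cref{sec_proofthstrong}: player 2 first disregards player 1 and plays an $\varepsilon$-optimal stationary policy of the auxiliary MDP $\mathcal{G}$ until his component of the state has settled inside some maximal communicating set $C_i$, and then switches to an $\varepsilon$-optimal Markov $N_0$-periodic strategy of the auxiliary game $\Gamma_i$ provided by \cref{prop_unif_Gamma_i}. Concretely, fix $\varepsilon>0$ and an initial state $(x,y)\in X\times Y$, take $\nu\in\Delta(B)^Y$ $\varepsilon$-optimal for player 2 in $\mathcal{G}$, and for each $i\in\{1,\dots,L\}$ take $\btau^i\in\bTcal_i$ a Markov $N_0$-periodic $\varepsilon$-optimal strategy in $\Gamma_i$. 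Define $\btau\in\bTcal$ by: play $\nu(Y_n)$ for all $n\leq T_1$, where $T_1$ is chosen below; for $n>T_1$, if $Y_{T_1}\in C_i$ play the action prescribed by $\btau^i$ re-synchronized to start at stage $T_1+1$, and otherwise play any fixed action.

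Next I would exploit finiteness of $Y$ and stationarity of $\nu$ to choose $T_1$. The Markov chain $(Y_n)$ induced by $\nu$ almost surely reaches one of its recurrent classes in finite expected time, and each such recurrent class is contained in some $C_i$, since by \cref{prop_partition} states in $D$ are transient under every stationary policy. Moreover, since $g$ equals $v^i_\infty$ on each $C_i$, the Cesàro limit $\lim_N \tfrac{1}{N}\sum_{n=1}^N \E^y_\nu[g(Y_n)]$ equals $\sum_i \pi_i v^i_\infty$, where $\pi_i$ is the probability under $\nu$ of being absorbed into a recurrent class contained in $C_i$; by $\varepsilon$-optimality of $\nu$ in $\mathcal{G}$, this Cesàro limit is at most $w_\infty(y)+\varepsilon$. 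Therefore, for $T_1$ large enough, all three of the following hold simultaneously: $\Proba^y_\nu(Y_{T_1}\in D)\leq\varepsilon$; the probability that $(Y_n)$ later leaves $\bigcup_i C_i$ given $Y_{T_1}\in\bigcup_i C_i$ is at most $\varepsilon$; and $\sum_{i=1}^L \Proba^y_\nu(Y_{T_1}\in C_i)\,v^i_\infty \leq w_\infty(y)+2\varepsilon$.

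The payoff analysis then proceeds by splitting over $[1,T_1]$ and $[T_1+1,N]$. For any $\bsigma\in\bScal$ and any $N\geq T_1/\varepsilon$, the first window contributes at most $T_1/N\leq\varepsilon$ to $\gamma_N(\bsigma,\btau)(x,y)$. For the second window, conditionally on $Y_{T_1}\in C_i$ and outside the $\varepsilon$-event where $(Y_n)$ later leaves $C_i$, the continuation game from stage $T_1+1$ is exactly $\Gamma_i$ started at $(X_{T_1+1},Y_{T_1+1})$, because $\btau^i$ only selects actions in $B_{y'}\subseteq\{b:q(C_i|y',b)=1\}$; by \cref{prop_unif_Gamma_i}, $\btau^i$ then guarantees a Cesàro payoff at most $v^i_\infty+2\varepsilon$ over this continuation, uniformly in $\bsigma$ and in the continuation initial state, provided $N-T_1$ is large enough. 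Combining these bounds with the trivial bound $1$ on the exceptional events (of total probability at most $2\varepsilon$) yields $\gamma_N(\bsigma,\btau)(x,y)\leq w_\infty(y)+O(\varepsilon)$ for all sufficiently large $N$, i.e.\ player 2 uniformly guarantees $w_\infty(y)$.

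The hard part will be the careful joint choice of $T_1$ ensuring that (i) the probability of having reached some $C_i$, (ii) the probability of subsequently leaving it, and (iii) the discrepancy between $\sum_i\Proba^y_\nu(Y_{T_1}\in C_i)v^i_\infty$ and the MDP Cesàro limit are all simultaneously of order $\varepsilon$; this follows from standard finite-state ergodic theory applied to the stationary chain induced by $\nu$ on $Y$. A secondary issue is that re-synchronizing the $N_0$-periodic strategy $\btau^i$ at stage $T_1+1$ should not destroy its $\varepsilon$-optimality over a window $[T_1+1,N]$ of large length, which is precisely the uniformity-in-initial-state content of \cref{prop_unif_Gamma_i}.
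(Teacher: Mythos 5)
Your proposal follows essentially the same route as the paper: a two-phase strategy that first plays an $\varepsilon$-optimal stationary policy $\nu$ of the MDP $\mathcal{G}$ (exploiting that the $Y$-process is autonomous, so the switching statistics are those of the chain induced by $\nu$ alone) and then hands over to an $\varepsilon$-optimal strategy of the relevant $\Gamma_i$ from \cref{prop_unif_Gamma_i}. The only substantive difference is the switching rule: the paper switches at the random hitting time of a recurrent class $R_j$ of the chain induced by $\nu$ (each $R_j\subseteq C_{\varphi(j)}$) and controls the error by splitting on whether $\min_k T_k<\sqrt{N}$, whereas you switch at a deterministic time $T_1$ and invoke finite-chain ergodicity to identify $\sum_i\Proba^{y}_{\nu}(Y_{T_1}\in C_i)\,v^i_\infty$ with the Cesàro value of $\nu$ in $\mathcal{G}$ up to $\varepsilon$; both work. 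One slip to fix in your construction: you play $\nu(Y_{T_1})$ at stage $T_1$ but select $\btau^i$ according to $Y_{T_1}$, and $\nu(Y_{T_1})$ may send the state out of $C_i$, so that $\btau^i$ would be asked to act at a state where it is not defined. Condition the switch on the state at the first stage where the new strategy actually acts (e.g.\ on $Y_{T_1+1}$); the ergodic argument for your item (iii) is unaffected. Note also that your item (ii) is superfluous: once $\btau^i$ is in force the state remains in $C_i$ with probability one, by definition of $B_y$.
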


To prove \cref{prop_player2guarantees}, we show that player 2 has an $\varepsilon$-optimal strategy which has a rather simple structure: play according to some $\varepsilon$-optimal stationary policy in $\mathcal{G}$ on $Y$, until reaching a recurrent class of the induced Markov chain on $Y$. Then switch to an $\varepsilon$-optimal Markov periodic strategy in some game $\Gamma_i$ having the property that the state remains in the corresponding maximal communicating set $C_i$ (indeed the $\varepsilon$-optimal strategies used in the proof of \cref{prop_player2guarantees} below can be taken Markov periodic thanks to \cref{prop_unif_Gamma_i}).

\begin{proof}
Let $\varepsilon>0$, and let $\nu_\mathcal{G}$ be an $\varepsilon$-optimal stationary policy of player 2 in $\mathcal{G}$.
$\nu_\mathcal{G}$ induces $l$ recurrent classes $R_1,\dots,R_l$ over $Y$. Moreover, by \cref{def_stationaryrecurrentclass}, there exists a mapping $\varphi : \{1,\dots,l\}\to\{1,\dots,l\}$ such that for all $i\in\{1,\dots,l\}$, $R_i\subseteq C_{\varphi(i)}$.

For all $i\in\{1,\dots,L\}$ define $T_i= \min\{n\in\N^\ast \ | \ Y_n\in R_i\}$ to be the hitting time of $R_i$ by $(Y_n)_{n\in\N^\ast}$. The minimum over an empty set is taken equal to $+\infty$.

We now consider the game $\Gamma$ with initial state $(x,y)\in X\times Y$. Let us define the strategy $\overline{\btau}\in\bTcal$ of player 2 as follows. Until there exists $i\in\{1,\dots,l\}$ such that the state of player 2 is in $R_i$ play $\nu_\mathcal{G}$. Let $n\in \N^\ast$ be the first stage at which the state of player 2 reaches one of the recurrent classes $R_i$, $i\in\{1,\dots,l\}$. From stage $n$ on, play $\btau^{\varphi(i)}$ which is an $\varepsilon$-optimal strategy in $\Gamma_{\varphi(i)}$.

Let $\bsigma$ be any strategy of player 1 in $\Gamma$. Remark that under $(\bsigma,\overline{\btau}),$ the laws of the $T_i$'s are the same as under $\nu_\mathcal{G}$.

Let $N\geq 2$,
\begin{align}
\frac{1}{N}\E_{\bsigma,\overline{\btau}}^{x,y}\left[ \sum_{n=1}^N u_n\right] &=\frac{1}{N}\sum_{i=1}^l\E_{\bsigma,\overline{\btau}}^{x,y}\left[ \left(\sum_{n=1}^{T_i}u_n \right)\1\left(\sqrt{N}>T_i = \min_{k\in\{1,\dots,l\}} T_k\right)\right]\label{eq_p2_1} \\ 
+&\frac{1}{N}\sum_{i=1}^l\E_{\bsigma,\overline{\btau}}^{x,y}\left[ \left(\sum_{n=T_i+1}^{N}u_n \right)\1\left(\sqrt{N}>T_i = \min_{k\in\{1,\dots,l\}} T_k\right)\right]\label{eq_p2_2} \\ 
+&\frac{1}{N}\sum_{i=1}^l\E_{\bsigma,\overline{\btau}}^{x,y}\left[ \left(\sum_{n=1}^{N}u_n \right)\1\left(\sqrt{N}\leq T_i = \min_{k\in\{1,\dots,l\}} T_k\right)\label{eq_p2_3} \right].
\end{align}
Since the payoffs $u_n$ are at most $1$ and $T_i$ is at most $\sqrt{N}$ in the indicator function, (\ref{eq_p2_1}) is at most
\begin{equation*}
\frac{\sqrt{N}}{N}\sum_{i=1}^l \Proba_{\bsigma,\overline{\btau}}^{x,y}\left(\sqrt{N}>T_i = \min_{k\in\{1,\dots,l\}} T_k\right),
\end{equation*}
which itself is at most $l\frac{\sqrt{N}}{N}$ which is smaller than $\varepsilon$ for $N$ large enough.

Remark that in (\ref{eq_p2_2}), $\frac{1}{N}\leq \frac{1}{N-T_i}$, hence this quantity is less than or equal to 
\begin{equation*}
\sum_{i=1}^l \E_{\bsigma,\overline{\btau}}^{x,y}\left[\frac{1}{N-T_i} \left(\sum_{n=T_i+1}^N u_n\right)\1\left(\sqrt{N}>T_i = \min_{k\in\{1,\dots,l\}} T_k\right)\right],
\end{equation*}
the latter equals
\begin{align*}
\sum_{i=1}^l \E_{\bsigma,\overline{\btau}}^{x,y}\left[\frac{1}{N-T_i} \sum_{n=T_i+1}^N u_n\Bigg\vert\sqrt{N}> T_i = \min_{k\in\{1,\dots,l\}} T_k\right]\Proba_{\bsigma,\overline{\btau}}^{x,y}\left(\sqrt{N}>T_i = \min_{k\in\{1,\dots,l\}} T_k\right).
\end{align*}
Recall that for all $i\in\{1,\dots,L\}$, $\btau^i$ is $\varepsilon$-optimal in $\Gamma_i$ which has uniform value $v^i_\infty$. For $N$ large enough the quantity above is less than or equal to
\begin{align*}
\sum_{i=1}^l \left(v^{\varphi(i)}_\infty+\varepsilon\right)\Proba_{\nu_\mathcal{G}}^{y}\left( T_i = \min_{k\in\{1,\dots,l\}} T_k\right),
\end{align*}
because under $(\bsigma,\overline{\btau})$, the laws of the $T_i$'s are the same as under $\nu_\mathcal{G}$.
And since $\nu_\mathcal{G}$ is $\varepsilon$-optimal in $\mathcal{G}$, $\sum_{i=1}^l v^{\varphi(i)}_\infty\Proba_{\nu_\mathcal{G}}^{y}\left( T_i = \min_{k\in\{1,\dots,l\}} T_k\right)$ is less than or equal to $w_\infty(y)+\varepsilon$.

Finally, since the payoffs $u_n$ are at most $1$, (\ref{eq_p2_3}) is less than or equal to
\begin{equation*}
\sum_{i=1}^l \Proba_{\bsigma,\overline{\btau}}^{x,y}\left(\sqrt{N}\leq T_i = \min_{k\in\{1,\dots,l\}} T_k\right),
\end{equation*}
which is at most
\begin{equation*}
l\Proba_{\nu_\mathcal{G}}^{y}\left(\sqrt{N}\leq \min_{k\in\{1,\dots,l\}} T_k\right),
\end{equation*}
which itself is less than $\varepsilon$ for $N$ large enough.
\end{proof}

\subsection{Player 1 uniformly guarantees \texorpdfstring{$w_\infty$}{w} in \texorpdfstring{$\Gamma$}{Gamma}}
\label{subsec_P1unifvalue}

\subsubsection{The auxiliary games \texorpdfstring{$(\widetilde{\Gamma}_i)_{i\in\{1,\dots,L\}}$}{Gamma tilde i}}
Let us fix $\varepsilon>0$ and $N_0\in\N^\ast$ accordingly as in \cref{prop_unif_Gamma_i}. Beware that the objects that we now introduce also depend on $\varepsilon$.

We construct $L$ auxiliary games $(\widetilde{\Gamma}_i)_{i\in\{1,\dots,L\}}$ which are copies of the games $(\Gamma_i)_{i\in\{1,\dots,L\}}$ with an additional clock keeping track of the time modulo $N_0$. The state of player 2 on $C_i\times[1,N_0]$, where $[1,N_0] = \{1,\dots, N_0\}$, moves on $C_i$ as in $\Gamma_i$ and on $[1,N_0]$ by adding $1$ at each stage (and starting back to $1$ when $N_0+1$ is reached). For all $i\in\{1,\dots,L\}$, \[\widetilde{\Gamma}_i = (X,C_i\times[1,N_0],A,(B_y)_{y\in C_i},p,\tilde{q},u),\]
and
\[\begin{array}{ccccc}
\tilde{q} & : & C_i\times[1,N_0]\times (B_y)_{y\in C_i} & \to & \Delta(C_i\times[1,N_0]) \\
 & & (y,t,b) & \mapsto & q(\cdot|y,b)\otimes \delta_{t+1}, \\
\end{array}\]
where $t$ is taken modulo $N_0$ in $\delta_{t+1}$. 

The purpose of these games is that they have the same uniform value as the $\Gamma_i$'s, but $\varepsilon$-optimal stationary strategies instead of $\varepsilon$-optimal Markov $N_0$-periodic strategies. Indeed, by \cref{prop_unif_Gamma_i}, let $\bsigma^i$ be an $\varepsilon$-optimal Markov $N_0$-periodic strategy in $\Gamma_i$. Let $\tilde{\bmu}^{i}$ be the following stationary strategy in $\widetilde{\Gamma}_i$: for all $(x,y,t)\in X\times C_i\times [1,N_0]$,
$\tilde{\bmu}^i(x,y,t) = \bsigma^i_t(x,y)$. This defines a stationary strategy of player 1 in $\widetilde{\Gamma}_i$ which is $\varepsilon$-optimal. 

\begin{remark}
Since we are dealing with Markov periodic strategies, one could think of defining maximal communicating sets with regard to this class of strategies rather than stationary strategies as in \cref{def_stationaryrecurrentclass}. However the construction we propose here appears to provide a simpler demonstration of player 1 uniformly guaranteeing $w_\infty$ in $\Gamma$.
\end{remark}

\subsubsection{The auxiliary game \texorpdfstring{$\widetilde{\Gamma}$}{Gamma tilde}}
We now gather together the games $\widetilde{\Gamma}_i$ into one game $\widetilde{\Gamma}$.
Let $\widetilde{Y} = Y\times [1,N_0].$ The game $\widetilde{\Gamma}$ is defined by $\widetilde{\Gamma}=\left(X,\widetilde{Y},A,B,p,\tilde{q},u\right).$
Where $\tilde{q}$ is extended on $\widetilde{Y}\times B$ as follows.

\[\begin{array}{ccccc}
\tilde{q} & : & \widetilde{Y}\times B & \to & \Delta(\widetilde{Y}) \\
 & & (y,t,b) & \mapsto &  \tilde{q}(\cdot|y,t,b),\\
\end{array}\]
where $\tilde{q}\left(y',t+1|y,t,b\right)=q(y'|y,b)$ if there exists $i\in\{1,\dots,L\}$ such that $y,y'\in C_i$, and $t$ is taken modulo $N_0$.
$\tilde{q}\left(y',1|y,t,b\right)=q(y'|y,b)$ if there exists $i\in\{1,\dots,L\}$ such that $y\in C_i$ and $y'\notin C_i$, or $y\in D$. Otherwise $\tilde{q}\left(y',t'|y,t,b\right)=0$.
In words, the clock on $[1,N_0]$ is incremented as in the games $\widetilde{\Gamma}_i$ as long as the state stays in $C_i$, and is reset to $1$ when the state jumps from one $C_i$ to another or to a transient state, or from a transient state to any other state.

Let $\widetilde{\bScal}$ and $\widetilde{\bTcal}$ be the set of strategies of player 1 and 2 respectively in $\widetilde{\Gamma}$.
It is important to note that any quantity guaranteed by a player in $\widetilde{\Gamma}$ is also guaranteed in $\Gamma$.

The following lemma states that the $Y$ component of a recurrent class in $X\times\widetilde{Y}$ cannot have nonempty intersection with two different maximal communicating sets in $Y$.

\begin{lemma}
\label{lem_rec_class_tilde}
Let $(\tilde{\bmu},\tilde{\bnu})$ be a pair of stationary strategies on $X\times \widetilde{Y}$. Let $R\subseteq X\times\widetilde{Y}$ be a recurrent class under $(\tilde{\bmu},\tilde{\bnu})$. 
Then there exists $i\in\{1,\dots,L\}$ such that $R \subseteq X\times C_i \times [1,N_0].$
\end{lemma}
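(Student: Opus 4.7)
The plan is to adapt the projection argument already used in the proof of \cref{prop_jointscc}, exploiting the fact that the $Y$-marginal of the augmented transition $\tilde{q}$ equals $q$ regardless of how the clock component is updated.

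First, I would set up the projection. Given the recurrent class $R\subseteq X\times \widetilde{Y}$ under $(\tilde{\bmu},\tilde{\bnu})$, let $R_Y = \{y\in Y \ | \ \exists (x,t)\in X\times[1,N_0], (x,y,t)\in R\}$, and for each $y\in R_Y$ let $R_y = \{(x,t)\in X\times[1,N_0] \ | \ (x,y,t)\in R\}$. Define a stationary policy $\nu\in\Delta(B)^Y$ by
\[
\nu(y) = \frac{1}{|R_y|}\sum_{(x,t)\in R_y}\tilde{\bnu}(x,y,t)
\]
for $y\in R_Y$, and arbitrarily elsewhere. The point of this averaging, as in the proof of \cref{prop_jointscc}, is that $q(y'|y,\nu(y)) > 0$ whenever there is at least one $(x,t)\in R_y$ with $q(y'|y,\tilde{\bnu}(x,y,t))>0$, by linearity of $q(y'|y,\cdot)$ in the mixed action.

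Next, I would show that $R_Y$ is closed under $\nu$. If $y\in R_Y$ and $q(y'|y,\nu(y))>0$, the averaging argument yields some $(x,t)\in R_y$ with $q(y'|y,\tilde{\bnu}(x,y,t))>0$. By construction of $\tilde{q}$, whose marginal on $Y$ coincides with $q$ and whose clock update is deterministic, this gives some $(x',t')$ with $\tilde{q}(y',t'|y,t,\tilde{\bnu}(x,y,t))>0$. Picking $x'$ in the (nonempty) support of $p(\cdot|x,\tilde{\bmu}(x,y,t))$, we get a positive-probability transition from $(x,y,t)\in R$ to $(x',y',t')$, which must lie in $R$ by closedness of the recurrent class, so $y'\in R_Y$. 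The same reasoning, iterated along a positive-probability path inside $R$ from some $(x,y,t)$ to some $(x',y',t')$, shows that any $y'\in R_Y$ is accessible from any $y\in R_Y$ under $\nu$. Hence $R_Y$ is a closed communicating set in the finite chain induced by $\nu$ on $Y$, i.e.\ a recurrent class of that chain.

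Finally, I would invoke \cref{prop_partition}: since $R_Y$ is a recurrent class under the stationary policy $\nu$, the partition $\{C_1,\dots,C_L,D\}$ forces $R_Y\subseteq C_i$ for some $i\in\{1,\dots,L\}$ (it cannot meet $D$ because states in $D$ are transient under \emph{every} stationary policy, and a state in a recurrent class cannot be transient). Therefore $R\subseteq X\times R_Y\times[1,N_0]\subseteq X\times C_i\times[1,N_0]$, which is the desired conclusion. The only mildly delicate step is the closedness check in the second paragraph, where one must keep track of the fact that $\tilde{q}$ resets the clock when the $Y$-component jumps between maximal communicating sets or to/from $D$; but since we are only tracking the $Y$-marginal, this clock bookkeeping is harmless and the projection argument from \cref{prop_jointscc} goes through verbatim.
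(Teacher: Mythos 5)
Your proof is correct and rests on the same fiber-averaging projection trick as the paper's: collapse the recurrent class along the forgotten coordinates, average the stationary strategy over each fiber, check that the projected set is closed and communicating, and then appeal to the classification of $Y$. The only difference is cosmetic — you project $R$ onto $Y$ alone and conclude via \cref{prop_partition} together with the maximality in \cref{def_stationaryrecurrentclass}, whereas the paper projects onto $X\times Y$, averages both players' strategies over the clock fiber, and invokes \cref{prop_jointscc}; both routes are equally valid.
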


\begin{proof}
Let $R' = \{(x,y)\in X\times Y \ | \ \exists t\in [1,N_0] \ (x,y,t)\in R\}$ be the projection of $R$ over $X\times Y$. For all $(x,y)\in R'$ let $R_{x,y} = \{t\in [1,N_0] \ | \ (x,y,t)\in R\}$.

Define the stationary strategies $\bmu$ and $\bnu$ on $X\times Y$ by, for all $(x,y)\in R'$,
\[\bmu(x,y) = \frac{1}{|R_{x,y}|} \sum_{t\in R_{x,y}} \tilde{\bmu}(x,y,t) \text{ and } \bnu(x,y) = \frac{1}{|R_{x,y}|} \sum_{t\in R_{x,y}} \tilde{\bnu}(x,y,t),\]
and arbitrarily outside $R'$.
Under $(\bmu,\bnu)$, $R'$ is a recurrent class. Hence by \cref{prop_jointscc} there exists $i\in\{1,\dots, L\}$ such that $R'\subseteq X\times C_i.$
\end{proof}

For all $i\in \{1,\dots,L\}$, let $\tilde{\bmu}^{i}$ be an $\varepsilon$-optimal stationary strategy of player 1 in $\widetilde{\Gamma}_i$. Define the stationary strategy $\widetilde{\bmu}$ of player 1 in $\widetilde{\Gamma}$ as follows. For all $(x,y,t)\in X\times\widetilde{Y}$ \[\tilde{\bmu}(x,y,t) = \begin{cases} \tilde{\bmu}^{i}(x,y,t) \text{ if there exists } i\in\{1,\dots,L\} \text{ such that } y\in C_i\\ \text{arbitrary fixed action if } y\in D. \end{cases}\]

In the game $\widetilde{\Gamma}$, by \cref{prop_bestreply}, player 2 has a stationary strategy $\tilde{\bnu}\in\Delta(B)^{X\times \widetilde{Y}}$ and there exists $M\in \N^\ast$ such that for all $N\geq M$ and all $\btau\in\widetilde{\mathcal{T}}$ \[\gamma_N(\tilde{\bmu},\tilde{\bnu})(x,y,t)\leq\gamma_N(\tilde{\bmu},\btau)(x,y,t)+\varepsilon.\]

\sloppy The pair $(\tilde{\bmu},\tilde{\bnu})$ induces a Markov chain on $X\times\widetilde{Y}$ with recurrent classes $R_1,\dots,R_m$. By \cref{lem_rec_class_tilde} there exists a mapping $\psi : \{1,\dots,m\}\to\{1,\dots,L\}$ such that for all $i\in\{1,\dots,m\}$ \[R_i\subseteq X\times C_{\psi(i)}\times [1,N_0].\]

For all $i\in\{1,\dots,m\}$ we define $\widetilde{T}_i = \min\{n\in \N^\ast \ | \ (X_n,Y_n,t_n) \in R_i\}$ to be the hitting time of $R_i$ by $(X_n,Y_n,t_n)_{n\geq 1}$.

\begin{lemma}
\label{lem_player1guarantee}
In the game $\widetilde{\Gamma}$ starting at $(x,y,1)\in X\times \widetilde{Y}$, player 1 uniformly guarantees \[\sum_{i=1}^m v^{\psi(i)}_\infty \Proba_{\tilde{\bmu},\tilde{\bnu}}^{x,y,1}\left(\widetilde{T}_i = \min_{k\in\{1,\dots,m\}} \widetilde{T}_k\right)-3\varepsilon.\]
\end{lemma}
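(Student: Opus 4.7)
The argument parallels the proof of \cref{prop_player2guarantees} but with the roles of the players reversed, now exploiting the best-response property of $\tilde{\bnu}$ together with \cref{lem_rec_class_tilde}. Fix any $\btau\in\widetilde{\bTcal}$. By the choice of $\tilde{\bnu}$ via \cref{prop_bestreply}, for $N$ large enough
\[\gamma_N(\tilde{\bmu},\btau)(x,y,1)\geq \gamma_N(\tilde{\bmu},\tilde{\bnu})(x,y,1)-\varepsilon,\]
so it suffices to lower bound $\gamma_N(\tilde{\bmu},\tilde{\bnu})(x,y,1)$. Under $(\tilde{\bmu},\tilde{\bnu})$ the Markov chain on $X\times\widetilde{Y}$ hits some recurrent class $R_i$ almost surely in finite time, and the hitting times $\widetilde{T}_i$ are a.s. finite with exponentially decaying tails.

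I would then decompose, dropping the nonnegative contribution from stages up to $\widetilde{T}_i$ and from the event $\{\min_k\widetilde{T}_k>\sqrt{N}\}$:
\[\gamma_N(\tilde{\bmu},\tilde{\bnu})(x,y,1)\geq \sum_{i=1}^m \E_{\tilde{\bmu},\tilde{\bnu}}^{x,y,1}\left[\frac{1}{N}\sum_{n=\widetilde{T}_i+1}^N u_n\cdot\mathbf{1}\left\{\widetilde{T}_i=\min_k \widetilde{T}_k\leq\sqrt{N}\right\}\right].\]
The crucial step is to control each continuation sum. Conditional on $\{\widetilde{T}_i=\min_k\widetilde{T}_k\leq\sqrt{N}\}$ and on $(X_{\widetilde{T}_i},Y_{\widetilde{T}_i},t_{\widetilde{T}_i})=(x_0,y_0,t_0)\in R_i$, the strong Markov property reduces the continuation game, from stage $\widetilde{T}_i+1$ on, to the pair $(\tilde{\bmu},\tilde{\bnu})$ started in $R_i$.

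The main obstacle is to show that this continuation game can be identified with the auxiliary game $\widetilde{\Gamma}_{\psi(i)}$, so that the $\varepsilon$-optimality of $\tilde{\bmu}^{\psi(i)}$ applies. By \cref{lem_rec_class_tilde}, $R_i\subseteq X\times C_{\psi(i)}\times[1,N_0]$; moreover since $R_i$ is a recurrent class of the chain induced by $(\tilde{\bmu},\tilde{\bnu})$, the transition kernel cannot leak out of $X\times C_{\psi(i)}\times[1,N_0]$ from any $(x_0,y_0,t_0)\in R_i$. This forces $\tilde{\bnu}(\cdot\,|\,x_0,y_0,t_0)$ to be supported in $B_{y_0}=\{b\in B : q(C_{\psi(i)}|y_0,b)=1\}$ (otherwise the $Y$-component would escape $C_{\psi(i)}$ with positive probability). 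Hence the restriction of $\tilde{\bnu}$ to $R_i$ is a bona fide strategy in $\widetilde{\Gamma}_{\psi(i)}$, and by $\varepsilon$-optimality of $\tilde{\bmu}^{\psi(i)}$ in $\widetilde{\Gamma}_{\psi(i)}$,
\[\E_{\tilde{\bmu},\tilde{\bnu}}^{x,y,1}\left[\frac{1}{N-\widetilde{T}_i}\sum_{n=\widetilde{T}_i+1}^N u_n\,\Bigg|\,\widetilde{T}_i,(X_{\widetilde{T}_i},Y_{\widetilde{T}_i},t_{\widetilde{T}_i})\right]\geq v_\infty^{\psi(i)}-\varepsilon\]
on the event $\{\widetilde{T}_i\leq\sqrt{N}\}$, provided $N$ is large enough (uniformly in the conditioning state, since there are only finitely many).

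Putting these pieces together and using $\frac{N-\widetilde{T}_i}{N}\geq 1-\frac{1}{\sqrt{N}}$ on $\{\widetilde{T}_i\leq\sqrt{N}\}$, I obtain
\[\gamma_N(\tilde{\bmu},\tilde{\bnu})(x,y,1)\geq \left(1-\tfrac{1}{\sqrt{N}}\right)\sum_{i=1}^m (v_\infty^{\psi(i)}-\varepsilon)\,\Proba_{\tilde{\bmu},\tilde{\bnu}}^{x,y,1}\left(\widetilde{T}_i=\min_k \widetilde{T}_k\leq\sqrt{N}\right),\]
and since $\Proba_{\tilde{\bmu},\tilde{\bnu}}^{x,y,1}(\min_k \widetilde{T}_k>\sqrt{N})\to 0$, the truncated probability can be replaced by $\Proba_{\tilde{\bmu},\tilde{\bnu}}^{x,y,1}(\widetilde{T}_i=\min_k \widetilde{T}_k)$ up to an $\varepsilon$ error for $N$ large. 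The three $\varepsilon$ losses — one from the best-response step, one from the $\varepsilon$-optimality of $\tilde{\bmu}^{\psi(i)}$, and one from the tail/truncation bounds — combine into the stated $3\varepsilon$ gap, completing the proof.
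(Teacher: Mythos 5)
Your proposal is correct and follows essentially the same route as the paper's proof: the same decomposition over the events $\{\widetilde{T}_i=\min_k\widetilde{T}_k\leq\sqrt{N}\}$, the same renormalization by $\tfrac{N-\widetilde{T}_i}{N}$, the same key observation that recurrence of $R_i$ forces $\tilde{\bnu}$ to play in $(B_y)_{y}$ so that the continuation is a play in $\widetilde{\Gamma}_{\psi(i)}$ where $\tilde{\bmu}^{\psi(i)}$ is $\varepsilon$-optimal, and the same accounting of the three $\varepsilon$ losses (best response, $\varepsilon$-optimality, truncation). The only cosmetic difference is that you invoke the best-response inequality at the start rather than at the end.
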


\begin{proof}Let $N\geq 2$ and $\varepsilon'>0$,
\begin{align*}
\frac{1}{N}\E_{\tilde{\bmu},\tilde{\bnu}}^{x,y,1}\left[\sum_{n=1}^N u_n\right]&\geq \sum_{i=1}^m \frac{1}{N}\E_{\tilde{\bmu},\tilde{\bnu}}^{x,y,1}\left[\left(\sum_{n=\widetilde{T}_i+1}^N u_n\right) \1\left(\sqrt{N}> \widetilde{T}_i = \min_{k\in\{1,\dots,m\}} \widetilde{T}_k\right) \right].
\end{align*}
This last quantity is equal to
\begin{align*}
\sum_{i=1}^m \E_{\tilde{\bmu},\tilde{\bnu}}^{x,y,1}\left[\frac{N-\widetilde{T}_i}{N}\frac{1}{N-\widetilde{T}_i}\left(\sum_{n=\widetilde{T}_i+1}^N u_n\right) \1\left(\sqrt{N}> \widetilde{T}_i = \min_{k\in\{1,\dots,m\}} \widetilde{T}_k\right) \right].
\end{align*}
Since $\widetilde{T}_i$ is taken less than $\sqrt{N}$ in the indicator function, the latter is greater than
\begin{align*}
\sum_{i=1}^m \left(1-\frac{\sqrt{N}}{N}\right) &\E_{\tilde{\bmu},\tilde{\bnu}}^{x,y,1}\left[\frac{1}{N-\widetilde{T}_i}\sum_{n=\widetilde{T}_i+1}^N u_n \Bigg\vert\sqrt{N}> \widetilde{T}_i = \min_{k\in\{1,\dots,m\}} \widetilde{T}_k\right]\\&\times\Proba_{\tilde{\bmu},\tilde{\bnu}}^{x,y,1}\left(\sqrt{N}> \widetilde{T}_i = \min_{k\in\{1,\dots,m\}} \widetilde{T}_k\right).
\end{align*}
Recall that for all $i\in\{1,\dots,L\}$, $\tilde{\bmu}^{i}$ is $\varepsilon$-optimal in $\widetilde{\Gamma}_i$, which has value $v^i_\infty$. Moreover, conditionally on $\widetilde{T}_i = \min_{k\in\{1,\dots,m\}} \widetilde{T}_k$, from $\widetilde{T}_i +1$ on, player 2 only plays actions that are in $(B_y)_{y\in R_i}$, otherwise the state process on $Y$ would have a positive probability of leaving $C_{\psi(i)}$ and $R_i$ would not be a recurrent class. Hence for $N$ large enough the quantity above is greater than or equal to
\begin{align*}
\sum_{i=1}^m \left(1-\frac{\sqrt{N}}{N}\right) \left(v^{\psi(i)}_\infty-\varepsilon\right)\Proba_{\tilde{\bmu},\tilde{\bnu}}^{x,y,1}\left(\sqrt{N}> \widetilde{T}_i = \min_{k\in\{1,\dots,m\}} \widetilde{T}_k\right),
\end{align*}
which, for $N$ large enough is greater than
\begin{align*}
\left(1-\varepsilon'\right)\sum_{i=1}^m \left(v^{\psi(i)}_\infty-\varepsilon\right)\left(\Proba_{\tilde{\bmu},\tilde{\bnu}}^{x,y,1}\left(\widetilde{T}_i = \min_{k\in\{1,\dots,m\}} \widetilde{T}_k\right)-\varepsilon'\right).
\end{align*}
Hence there exists $\widetilde{N}\in\N^\ast$ such that for all $N\geq \widetilde{N}$ and all $\btau\in\widetilde{\mathcal{T}}$,
\begin{align*}
\gamma_N(\tilde{\bmu},\btau)(x,y,1)+\varepsilon  \geq& \left(1-\varepsilon'\right)\sum_{i=1}^m v^{\psi(i)}_\infty\left(\Proba_{\tilde{\bmu},\tilde{\bnu}}^{x,y,1}\left(\widetilde{T}_i = \min_{k\in\{1,\dots,m\}} \widetilde{T}_k\right)-\varepsilon'\right)\\ &-\varepsilon(1-\varepsilon').
\end{align*}
Finally, for $\varepsilon'$ small enough, the right-hand side is greater than
\[\sum_{i=1}^m v^{\psi(i)}_\infty \Proba_{\tilde{\bmu},\tilde{\bnu}}^{x,y,1}\left(\widetilde{T}_i = \min_{k\in\{1,\dots,m\}} \widetilde{T}_k\right)-2\varepsilon,\]
which concludes the proof.
\end{proof}

To conclude, we prove that the payoff uniformly guaranteed (up to $3\varepsilon$) by player 1 in $\widetilde{\Gamma}$ in \cref{lem_player1guarantee} can be obtained by player 2 as a limit payoff in the MDP $\mathcal{G}$, and hence is greater than $w_\infty$, the uniform value of $\mathcal{G}$. 

\begin{proposition}
\label{prop_player1guarantees}
Player 1 uniformly guarantees $w_\infty$ in $\Gamma$.
\end{proposition}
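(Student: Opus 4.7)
The plan is to use \cref{lem_player1guarantee} and show that the quantity it furnishes is at least $w_\infty(y)$, which will be done by exhibiting a policy for Player 2 in the auxiliary MDP $\mathcal{G}$ whose long-run average payoff from $y$ coincides with exactly that quantity. Since Player 2 minimises in $\mathcal{G}$ and $w_\infty$ is its uniform value, such a policy automatically bounds the quantity below by $w_\infty(y)$.

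First, I would construct a policy $\tau_{\mathcal{G}}$ of Player 2 in $\mathcal{G}$ that internally simulates the missing coordinates $(X,t)$ of the process on $X\times\widetilde Y$. Starting from an arbitrary $\bar X_1\in X$ and $\bar t_1=1$, at each stage $n$ Player 2 observes $Y_n$, draws a virtual action $\bar A_n\sim\tilde{\bmu}(\bar X_n,Y_n,\bar t_n)$, plays $b_n\sim\tilde{\bnu}(\bar X_n,Y_n,\bar t_n)$ in $\mathcal{G}$, and then internally updates $\bar X_{n+1}\sim p(\cdot|\bar X_n,\bar A_n)$ and advances $\bar t_n$ according to the clock rule of $\widetilde\Gamma$ determined by $(Y_n,Y_{n+1})$. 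By construction the joint law of $(\bar X_n,Y_n,\bar t_n)_{n\ge 1}$ under $\Proba^{y}_{\tau_{\mathcal{G}}}$ in $\mathcal{G}$ equals the law of $(X_n,Y_n,t_n)_{n\ge 1}$ under $\Proba^{x,y,1}_{\tilde{\bmu},\tilde{\bnu}}$ in $\widetilde{\Gamma}$; in particular the hitting times $\widetilde{T}_i$ have the same distribution.

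Second, because the finite Markov chain $(X_n,Y_n,t_n)$ reaches one of the recurrent classes $R_i\subseteq X\times C_{\psi(i)}\times[1,N_0]$ almost surely, the $Y$-coordinate eventually remains in $C_{\psi(i)}$ on the event $\{\widetilde T_i=\min_k\widetilde T_k\}$, so $g(Y_n)=v^{\psi(i)}_\infty$ there. A Cesàro mean computation very much parallel to the one in the proof of \cref{lem_player1guarantee} (splitting according to which recurrent class is reached first, bounding the transient excursions through $D$ by a term of size $O(1/\sqrt N)$) yields
\[
\lim_{N\to\infty}\frac{1}{N}\E^{y}_{\tau_{\mathcal{G}}}\Bigl[\sum_{n=1}^N g(Y_n)\Bigr]=\sum_{i=1}^m v^{\psi(i)}_\infty\,\Proba^{x,y,1}_{\tilde{\bmu},\tilde{\bnu}}\Bigl(\widetilde T_i=\min_{k\in\{1,\dots,m\}}\widetilde T_k\Bigr).
\]
Since $\tau_{\mathcal{G}}$ is an admissible policy in $\mathcal{G}$ and $w_\infty(y)$ is the uniform value (hence an infimum over policies of Player 2), the right-hand side is $\ge w_\infty(y)$.

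Combining this lower bound with \cref{lem_player1guarantee} gives that Player 1 uniformly guarantees $w_\infty(y)-3\varepsilon$ in $\widetilde\Gamma$ from $(x,y,1)$, and hence also in $\Gamma$ from $(x,y)$ by the observation that any quantity guaranteed in $\widetilde\Gamma$ is guaranteed in $\Gamma$. Letting $\varepsilon\downarrow 0$ concludes. The main technical obstacle I expect is verifying cleanly the distributional equivalence between the process governed by $(\tilde{\bmu},\tilde{\bnu})$ in $\widetilde\Gamma$ and the one realised by $\tau_{\mathcal{G}}$ in $\mathcal{G}$ (so that in particular the hitting-time distributions coincide), together with checking that the contributions of transient excursions in $D$, where $g$ is set to the arbitrary value $1/2$, vanish in the Cesàro limit on the MDP side.
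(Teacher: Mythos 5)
Your proposal is correct and follows essentially the same route as the paper: invoke \cref{lem_player1guarantee}, build the policy $\tau_{\mathcal{G}}$ in which Player 2 simulates the fictitious $X$ and clock coordinates so that the hitting times $\widetilde{T}_i$ have the same law as under $(\tilde{\bmu},\tilde{\bnu})$, identify the resulting Cesàro limit payoff in $\mathcal{G}$ with the guaranteed quantity, and conclude it is at least $w_\infty(y)$. The paper's argument is exactly this, stated slightly more tersely.
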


\begin{proof}
By \cref{lem_player1guarantee}, in the game $\widetilde{\Gamma}$ starting at $(x,y,1)\in X\times \widetilde{Y}$ player 1 uniformly guarantees \[\sum_{i=1}^m v^{\psi(i)}_\infty\Proba_{\tilde{\bmu},\tilde{\bnu}}^{x,y,1}\left(\widetilde{T}_i = \min_{k\in\{1,\dots,m\}} \widetilde{T}_k\right)-3\varepsilon.\] 

In the MDP $\mathcal{G}$, let $\tau_\mathcal{G}$ be the following policy of player 2. At stage $1$, play $\tilde{\bnu}(x,y,1)$, a new state $(y_2,t_2)\in \widetilde{Y}$ is selected according to $\tilde{q}(\cdot |y,1,\tilde{\bnu}(x,y,1))$. At stage $2$, play $\tilde{\bnu}(x_2,y_2,t_2)$ where $x_2\in X$ is selected according to $p(\cdot|x,\tilde{\bmu}(x,y,1))$. A new state $(y_3,t_3)\in \widetilde{Y}$ is selected according to $\tilde{q}(\cdot |y_2,t_2,\tilde{\bnu}(x_2,y_2,t_2))$. Inductively at stage $n>2$, play $\tilde{\bnu}(x_n,y_n,t_n)$ where $x_n\in X$ is selected according to $p(\cdot|x_{n-1},\tilde{\bmu}(x_{n-1},y_{n-1},t_{n-1}))$.

$\tau_\mathcal{G}$ has the following interpretation. Player 2 plays in $\mathcal{G}$ according to $\tilde{\bnu}\in \Delta(B)^{X\times \widetilde{Y}}$ by simulating at each stage a fictitious state of player 1 on $X$ that follows $\tilde{\bmu}\in \Delta(B)^{X\times \widetilde{Y}}$. This induces a fictitious hitting times $\widetilde{T}_i$'s of the recurrent classes $R_1,\dots,R_m$.

Under $\tau_\mathcal{G}$ the laws of the $ \widetilde{T}_i$'s are the same as under $(\tilde{\bmu},\tilde{\bnu})$. After the process $(X_n,Y_n,t_n)$ has reached $R_i$, the payoff to player 2 in $\mathcal{G}$ is $v^i_\infty$ at each stage.

Therefore, $\tau_\mathcal{G}$ yields in $\mathcal{G}$ a limit payoff of \[\sum_{i=1}^m v^{\psi(i)}_\infty\Proba_{\tilde{\bmu},\tilde{\bnu}}^{x,y,1}\left(\widetilde{T}_i = \min_{k\in\{1,\dots,m\}} \widetilde{T}_k\right).\] Hence this quantity is greater than $w_\infty(y)$.

Since in the initial game $\Gamma$ starting at $(x,y)$, player 1 also uniformly guarantees \[\sum_{i=1}^m v^{\psi(i)}_\infty\Proba_{\tilde{\bmu},\tilde{\bnu}}^{x,y,1}\left(\widetilde{T}_i = \min_{k\in\{1,\dots,m\}} \widetilde{T}_k\right)-3\varepsilon,\] he uniformly guarantees $w_\infty(y)-3\varepsilon$.

This is true for all $\varepsilon>0$. Hence player 1 uniformly guarantees $w_\infty$ in $\Gamma$.
\end{proof}

\section{Proof of \texorpdfstring{Theorem~\ref{THM_WEAK}}{Theorem~\ref{THM_WEAK}}}
\label{sec_proofthweak}

In \cref{subsec_counterexample} we present the weakly communicating on both sides zero-sum product stochastic game used to prove \cref{THM_WEAK}. The example proposed shows that in weakly communicating on both sides games, even if players can go from any state to any other state of their component in finite time, they can make mistakes that are irreversible with regards to the joint state. \cref{subsec_shapleysimplification} is dedicated to the simplification of the Shapley equations obtained for the weakly communicating on both sides game (\cref{prop_equationsknown}). Finally in \cref{subsec_examplerenault} we show that the Shapley equations obtained after simplification are also those of a simple game of perfect information, i.e., in each state only one player controls the transition, with two absorbing and two non absorbing states. The latter game does not have an asymptotic value (\cref{lem_x-y,lem_vleq1/2,lem_v=1/2,lem_limsupleq4/9}).

\subsection{A counterexample}
\label{subsec_counterexample}
\subsubsection{State spaces and action sets}
The state space of player 1 is $X = \{x,y\}\times C_8 $ where $C_8 = \Z/8\Z$.
The state space of player 2 is $Y = \{x',y'\}\times C'_8 $ where $C'_8 =  \Z/8\Z$.

Let $I = \{0\}\cup \left\lbrace 1/2^{2n} \ \vert \ n\geq 1\right\rbrace$, and $J=[0,1/4]$. Let $A = I\times \{-1,+1\}\cup \{0,1\}\times \{0\}$ and $B = J\times \{-1,+1\}\cup \{0,1\}\times \{0\}$ be the action sets of player 1 and 2 respectively. 

It is essential that $I$, and therefore $A$, are not semi-algebraic. Indeed, since $X$ and $Y$ are finite, and the transitions we define below are polynomial, if $A$ and $B$ were definable in some o-minimal structure, the game would have a uniform value, see \citep[Theorem 4]{bolteetal2014}.

\subsubsection{Transitions}
For $i\in \{x,y\}$ we denote by $-i$ the element of $\{x,y\}\backslash \{i\}$.

In state $(i,k)\in X$ if player 1 plays $(\alpha,p)\in I\times\{-1,+1\}$ then with probability $1-\alpha-\alpha^2$ the new state is $(i,k+p)$, with probability $\alpha$ the new state is $(-i,k + p)$, and with probability $\alpha^2$ the new state is $(i,k-p)$ (see \cref{transition_1}).

Still in state $(i,k)\in X$, if player 1 plays $(\alpha,0)\in \{0,1\}\times \{0\}$, then with probability $1-\alpha$ the state remains in $(i,k)$ and with probability $\alpha$ the new state is $(-i,k)$ (see \cref{transition_0}).

Transitions for player 2 are analogous on $Y$.

\begin{figure}[ht]
\begin{center}
\begin{tikzpicture}[scale=0.6]
\node at (-5,2){$x$};
\node at (-5,-2){$y$};
\draw[dotted,->,NavyBlue,thick] (-5.5,2.5) to [out=135,in=180] (-5,4)
	to [out=0,in=45] (-4.5,2.5);
\node [below] at (-5,4) {$\alpha^2$};
\draw[dashed,->,Mahogany] (-5.5,1.5) to [out=225 ,in=135] (-5.5,-1.5);
\node [left] at (-6,0) {$\alpha$};
\draw[->,OliveGreen] (-5.5,2) to [out=180,in=180] (-5,5)
	to [out=0,in=0] (-4.5,2);
\node [above] at (-5,5) {$1-\alpha-\alpha^2$};

\draw (-5,2) circle (0.4);
\draw (-5,-2) circle (0.4);

\draw (0,0) circle [radius=2];
\node [below] at (0,-2) {$0$};
\draw (0,-1.9)--(0,-2.1);
\node [above] at (0,2) {$4$};
\draw (0,1.9)--(0,2.1);
\node [right] at (2,0) {$2$};
\draw (1.9,0)--(2.1,0);
\node [left] at (-2,0) {$6$};
\draw (-1.9,0)--(-2.1,0);

\node [below right] at (1.3,-1.3) {$1$};
\draw (1.35,-1.35)--(1.5,-1.5);
\node [above left] at (-1.3,1.3) {$5$};
\draw (1.35,1.35)--(1.5,1.5);
\node [above right] at (1.3,1.3) {$3$};
\draw (-1.35,1.35)--(-1.5,1.5);
\node [below left] at (-1.3,-1.3) {$7$};
\draw (-1.35,-1.35)--(-1.5,-1.5);

\draw[dashed,->,Mahogany] (2.6,0) to [out=0,in=270] (3,1.3) to [out=90,in=45] (2,2);
\node[below left] at(3,1.4) {$\alpha$};

\draw[dotted,->,NavyBlue,thick] (2.6,0) to [out=0,in=90] (3,-1.3) to [out=270,in=315] (2,-2);
\node[right] at(3,-1.4) {$\alpha^2$};

\draw[->,OliveGreen] (2.6,0) to [out=0,in=270] (4,1.7) to [out=90,in=90] (1.5,2);
\node[above right] at(4,1.7) {$1-\alpha-\alpha^2$};
\end{tikzpicture}
\end{center}
\caption{Transition of player 1 when playing $(\alpha,+1)$, $\alpha\in I$ in state $(x,2)$}
\label{transition_1}
\end{figure}
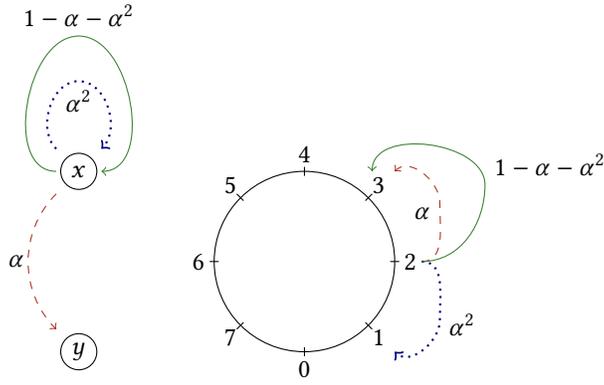

\begin{figure}[ht]
\begin{center}
\begin{tikzpicture}[scale=0.6]
\node at (-5,2){$x$};
\node at (-5,-2){$y$};
\draw[->,OliveGreen] (-5.5,2.5) to [out=135,in=180] (-5,4)
	to [out=0,in=45] (-4.5,2.5);
\node [above] at (-5,4) {$1-\alpha$};
\draw[dashed,->,Mahogany] (-5.5,1.5) to [out=225 ,in=135] (-5.5,-1.5);
\node [left] at (-6,0) {$\alpha$};

\draw (-5,2) circle (0.4);
\draw (-5,-2) circle (0.4);

\draw (0,0) circle [radius=2];
\node [below] at (0,-2) {$0$};
\draw (0,-1.9)--(0,-2.1);
\node [above] at (0,2) {$4$};
\draw (0,1.9)--(0,2.1);
\node [right] at (2,0) {$2$};
\draw (1.9,0)--(2.1,0);
\node [left] at (-2,0) {$6$};
\draw (-1.9,0)--(-2.1,0);

\node [below right] at (1.3,-1.3) {$1$};
\draw (1.35,-1.35)--(1.5,-1.5);
\node [above left] at (-1.3,1.3) {$5$};
\draw (1.35,1.35)--(1.5,1.5);
\node [above right] at (1.3,1.3) {$3$};
\draw (-1.35,1.35)--(-1.5,1.5);
\node [below left] at (-1.3,-1.3) {$7$};
\draw (-1.35,-1.35)--(-1.5,-1.5);

\draw [dashed,->,Mahogany] (2.6,-0.4) to [out=315,in=270] (3.5,0) to [out=90,in=45] (2.6,0.4);
\node [left] at (3.5,0) {$\alpha$};

\draw [->,OliveGreen] (2.4,-0.6) to [out=290,in=270] (4,0) to [out=90,in=70] (2.4,0.6);
\node [right] at (4,0) {$1-\alpha$};
\end{tikzpicture}
\end{center}
\caption{Transition of player 1 when playing $(\alpha,0)$, $\alpha\in\{0,1\}$ in state $(x,2)$}
\label{transition_0}
\end{figure}
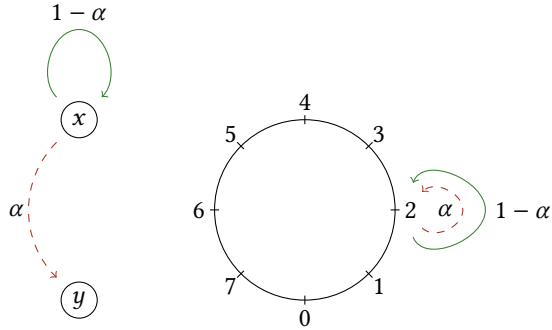

Hence by playing $(0,p)$, player 1 totally controls the dynamics on $C_8$, and by playing $(\alpha,0)$ with $\alpha$ equal to $0$ or $1$ he totally controls the dynamics on $\{x,y\}$. And likewise for player 2.

\subsubsection{Payoffs}
Let $(i,k)\in X$ and $(i',k')\in Y$. We denote by $d_{C_8}(k,k')$ the distance between player 1 and 2 on the circle $C_8$.

The payoff function $u$ is defined as follows.
If $d_{C_8}(k,k')\geq 3$ then $u((i,k),(i',k')) = 1$.
If $d_{C_8}(k,k')\leq 1$ then $u((i,k),(i',k')) = 0$.
Otherwise, if $d_{C_8}(k,k') = 2$ then $u$ is defined by the following table:
\begin{align*}
\begin{tabular}{|c|c|c|}
\hline 
 $u(\cdot,\cdot)$ & $x'$ & $y'$ \\ 
\hline 
$x$ & $0$ & $1$ \\ 
\hline 
$y$ & $1$ & $0$ \\ 
\hline 
\end{tabular}
\end{align*}

The interpretation of the game is the following. Player 1 wants to maximize his distance to player 2 who wants to minimize his distance to player 1. If the distance between them on the circle is at most $1$ or at least $3$, then their positions in $\{x,y\}$ and $\{x',y'\}$ do not matter. Whereas if the distance between them on the circle is equal to $2$, then player 1 wants to be in $x$ (resp. $y$) when player 2 is in $y'$ (resp. $x'$).

Note also that if the distance between the players is at least $3$ (resp. at most $1$), then player 1 (resp. player 2) can play such that the distance is always at least $3$ (resp. at most $1$), and this is optimal for him. Hence those joint states on $X\times Y$ act as absorbing states with payoff $1$ and $0$ respectively.

\subsubsection{Shapley equations}
For $p\in \{-1,0,+1\}$ we denote as well $p$ by the triplet $(\pj,\pjj,\pjjj)\in \{0,1\}^3$ with $\pj = 1$ if $p=-1$ and $\pj=0$ otherwise, and likewise for $\pjj$ and $\pjjj$. We define similarly $q = (\qj,\qjj,\qjjj)$.

Let $(i,k)\in X$ and $(i',k')\in Y$ be the initial states of player 1 and 2.
By symmetry of the game, we only consider the cases $(i,k)\in\{(x,0),(x,1),(x,2),(x,3),(x,4)\}$ and $(i',k')\in\{(x',0),(y',0)\}$.
Moreover, the joint state $((i,k),(i',0))$ is denoted $(i,i',k)$.

Let $\lambda\in (0,1)$. Let $\x = \vl(x,y',2)$ and $\y=\vl(x,x',2)$.
Clearly $\vl(\cdot,\cdot,3)=\vl(\cdot,\cdot,4)=1$ and $\vl(\cdot,\cdot,0)=\vl(\cdot,\cdot,1)=0$. $\x$ is the value of the game, played with mixed strategies, where player 1 chooses $(\alpha,p)\in A$ and player 2 chooses $(\beta,q)\in B$ with payoff $\lambda + (1-\lambda)h(\x,\y,\alpha,p,\beta,q)$ where $h(\x,\y,\alpha,p,\beta,q)$ equals
\begin{align*}
\begin{split}
&\bigg[ 
	\left(
		(1-\alpha-\alpha^2)(1-\beta-\beta^2) + \alpha\beta+\alpha^2\beta^2
		\right)(\pj\qj+\pjjj\qjjj)\\
&+
	\left(
		(1-\alpha)(1-\beta)+\alpha\beta
	\right)\pjj\qjj\\
&+
	\left(
		(1-\alpha-\alpha^2)\beta^2 + (1-\beta-\beta^2)\alpha^2
	\right)(\pj\qjjj+\pjjj\qj)
\bigg]\x\\
&+
\bigg[
	\left(
		(1-\alpha-\alpha^2)\beta + (1-\beta-\beta^2)\alpha
	\right)(\pj\qj+\pjjj\qjjj)\\
	&+\left((1-\alpha)\beta+(1-\beta)\alpha\right)\pjj\qjj + (\alpha\beta^2+\beta\alpha^2)(\pj\qjjj+\pjjj\qj)
\bigg]\y\\
&+\alpha^2(1-\beta^2)\pj\qj+\beta^2(1-\alpha^2)\pjjj\qjjj+(1-\alpha^2)\pjjj\qjj+\alpha^2\pj\qjj\\
&+\beta^2\pjj\qjjj+(1-\beta^2)\pjj\qj+(1-\alpha^2)(1-\beta^2)\pjjj\qj+\alpha^2\beta^2\pj\qjjj.
\end{split}
\end{align*}

$\y$ is the value of the game, played with mixed strategies, where player 1 chooses $(\alpha,p)\in A$ and player 2 chooses $(\beta,q)\in B$ with payoff $(1-\lambda)h(\y,\x,\alpha,p,\beta,q)$.

\subsection{Simplification of \texorpdfstring{$(\x)_{\lambda\in(0,1)}$}{xlambda} and \texorpdfstring{$(\y)_{\lambda\in(0,1)}$}{ylambda}}
\label{subsec_shapleysimplification}

The aim of this section is to simplify the expressions of $\x$ and $\y$ obtained in the previous section.

Let $\lambda\in (0,1)$. It is clear that $\x>0$, therefore $\y>0$ because player 1 can play $(\alpha,+1)$ with $\alpha>0$ in $(x,x',2)$. Moreover player 2 can play $(\beta,+1)$ with $\beta>0$ in $(x,y',2)$ and it is easy to check that $\x<1$.

\begin{lemma}
\label{lem_dominant}
For all $\lambda\in (0,1)$ the following equations hold.
\begin{align}
\label{eq1}
&\x>\y\\
\label{eq2}
\begin{split}
&\y =(1-\lambda)\max_{\substack{(\alpha,p)\in A}} \left(\alpha^2(\pj-\pjjj)\y+\alpha\pjjj(\x-\y)+\pjjj\y\right)
\end{split} \\ 
\label{eq3}
&\x = \lambda + (1-\lambda)\min_{\substack{(\beta,q)\in B}} \bigg(\beta^2(\qj-\qjjj)(\x-1)+\beta\qjjj(\y-\x)+\qjjj(\x-1)+1\bigg).
\end{align}
\end{lemma}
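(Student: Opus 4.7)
My plan is to verify the three claims by exhibiting a saddle-point structure for the Shapley operator based on the candidate deterministic action $(0,+1)$ for each player at their controlling state (player 2 in the $\y$-game, player 1 in the $\x$-game). The simplified right-hand sides of (\ref{eq2}) and (\ref{eq3}) then arise by plugging this candidate into $h$ and enumerating over the opposing player's type component $p$ or $q$.

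First, I would establish $\x>\y$ by bounding the $\y$-game from above: letting player 2 play $(\beta,q)=(0,+1)$ sends his state deterministically to $(x',1)$, so substituting into $h$ yields, branch by branch in $p$, the expressions $(1-\alpha-\alpha^2)\y+\alpha\x$ for $p=+1$, $\alpha^2\y$ for $p=-1$, and $0$ for $p=0$. Using $\alpha\in I\subseteq[0,1/4]$, the maximum over $(\alpha,p)$ is bounded by $\max\{\y,\,\tfrac{3}{4}\y+\tfrac{1}{4}\x\}$, so $\y\leq(1-\lambda)\max\{\y,\,\tfrac{3}{4}\y+\tfrac{1}{4}\x\}$. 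The case $\y\geq\x$ collapses this to $\y\leq(1-\lambda)\y$, impossible for $\lambda>0$; otherwise one reads off $\y(1+3\lambda)\leq(1-\lambda)\x$, yielding $\y<\x$ strictly.

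Next, the substitution just described not only upper-bounds $\y$ but produces exactly the right-hand side of (\ref{eq2}) (after verifying the branch expansion matches $\alpha^2(\pj-\pjjj)\y+\alpha\pjjj(\x-\y)+\pjjj\y$). To upgrade the inequality to equality, I would check that $(\beta,q)=(0,+1)$ is player 2's best response to player 1's optimal reply $(\alpha^\ast,+1)$, so that $(\alpha^\ast,+1,0,+1)$ is a saddle of $h(\y,\x,\cdot)$. This requires a case analysis on $(\beta,q)$: for $q=+1$, direct differentiation gives $\partial_\beta h|_{\beta=0}=(1-2\alpha^\ast-(\alpha^\ast)^2)(\x-\y)>0$ using $\alpha^\ast\leq 1/4$ and $\x>\y$, promoted to a global minimum over $\beta\in[0,1/4]$ by a convexity argument; for $q=-1$ or $q=0$, $h$ evaluates to $1-\alpha^2(1-\y)$ or $1-\alpha^2$ respectively, both exceeding $(1-\alpha-\alpha^2)\y+\alpha\x$ since $\x,\y\in(0,1)$ and $\alpha\leq 1/4$. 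The symmetric argument for the $\x$-game uses $(\alpha,p)=(0,+1)$ as player 1's saddle action and yields (\ref{eq3}) after substituting into $h$ and enumerating over $q$.

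The main obstacle is the global saddle verification in the $q=+1$ (resp.\ $p=+1$) branch: promoting the local minimum at $\beta=0$ (resp.\ the local maximum at $\alpha=0$) to a global extremum over the compact set $J=[0,1/4]$ (resp.\ $I$) requires either a direct convexity check of $h(\y,\x,\alpha^\ast,+1,\cdot,+1)$ in $\beta$, or a boundary comparison at $\beta=1/4$. Once these monotonicity checks are secured, all the remaining case comparisons reduce to elementary inequalities involving $\x>\y$, $\x,\y\in(0,1)$, and $\alpha,\beta\leq 1/4$, from which all three equations of the lemma follow.
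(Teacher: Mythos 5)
Your route is genuinely different from the paper's: you propose to exhibit a pure saddle point $\bigl((\alpha^\ast,+1),(0,+1)\bigr)$ and verify mutual best responses, whereas the paper proves the stronger fact that $(0,+1)$ is a weakly \emph{dominant} action for player 2 in the $\y$-game (and for player 1 in the $\x$-game): it checks $h(\y,\x,\alpha,p,\beta,q)-h(\y,\x,\alpha,p,0,+1)\geq 0$ for \emph{all} $(\alpha,p)\in A$ and $(\beta,q)\in B$, by writing each of the nine $(p,q)$-differences as an explicit sum of terms nonnegative under $\x>\y$ and $\x,\y\in(0,1)$ --- no calculus, no identification of $\alpha^\ast$. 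Your derivation of \eqref{eq1} and your branch expansion of $h(\y,\x,\alpha,p,0,+1)$ are correct (though note you implicitly use $\y>0$, established just before the lemma, to get the contradiction from $\y\leq(1-\lambda)\y$), and the saddle-point scheme is logically sufficient for \eqref{eq2} and \eqref{eq3}.

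However, the best-response verification as you describe it has a concrete hole: for $q=-1$ you assert that $h$ ``evaluates to $1-\alpha^2(1-\y)$'', but that is the value only at $\beta=0$. In the $q=-1$ branch player 2 still chooses $\beta\in J=[0,1/4]$, and
\begin{equation*}
h(\y,\x,\alpha,+1,\beta,-1)=\bigl[(1-\alpha-\alpha^2)\beta^2+(1-\beta-\beta^2)\alpha^2\bigr]\y+(\alpha\beta^2+\beta\alpha^2)\x+(1-\alpha^2)(1-\beta^2)
\end{equation*}
genuinely depends on $\beta$; the deviations $(\beta,-1)$ with $\beta>0$ (and, symmetrically, $(\alpha,-1)$ with $\alpha>0$ for player 1 in the $\x$-game) are simply not covered by your case analysis, whereas the paper's ``3rd case'' handles exactly this. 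Separately, the globalization step you flag in the $q=+1$ branch is left open: it can in fact be closed by your convexity suggestion, since the coefficient of $\beta^2$ in $h(\y,\x,\alpha,+1,\beta,+1)$ equals $(1-\y)-\alpha(\x-\y)-\alpha^2(1-2\y)$, which is positive for $\alpha\leq 1/4$ and $0<\y<\x<1$, but this computation (or the endpoint comparison at $\beta=1/4$) must actually be carried out. Both gaps are fixable, but as written the proof does not rule out all of the opponent's deviations, which is the entire content of \eqref{eq2} and \eqref{eq3}.
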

The proof of \cref{lem_dominant} is postponed to \cref{se:appendix}. In the next proposition, we show that $\x$ and $\y$ indeed have a rather simple expression.
\begin{proposition}
\label{prop_equationsknown}
For all $\lambda\in (0,1)$,
\begin{align}
\label{eq4}
&\lambda\y =(1-\lambda)\max_{\alpha\in I} \left(-\alpha^2 \y + \alpha(\x-\y)\right)\\
\label{eq5}
&\lambda\x = \lambda + (1-\lambda)\min_{\beta\in J} \left(\beta^2(1-\x)+\beta(\y-\x)\right).
\end{align}
\end{proposition}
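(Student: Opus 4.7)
My plan is to prove the two equalities by starting from the equations in \cref{lem_dominant}, decomposing the action sets $A$ and $B$ according to the discrete component $p\in\{-1,0,+1\}$ and $q\in\{-1,0,+1\}$, and showing that only $p=+1$ (respectively $q=+1$) can achieve the maximum (respectively minimum) in the Shapley equation. The key inputs I will use are that $0<\y<\x<1$ and that $I\subseteq[0,1/4]$ and $J=[0,1/4]$.

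First I treat equation (\ref{eq2}). For each choice of $p$, the triplet $(\pj,\pjj,\pjjj)$ reduces the inner expression to a single term. Explicitly: for $p=-1$ the expression equals $\alpha^2\y$ with $\alpha\in I$; for $p=+1$ it equals $-\alpha^2\y+\alpha(\x-\y)+\y$ with $\alpha\in I$; for $p=0$ (so $\alpha\in\{0,1\}$) it equals $0$. Since $\max I=1/4$, the maximum over the branch $p=-1$ is $\y/16$, and the branch $p=0$ gives $0$. But the branch $p=+1$ already yields $\y$ at $\alpha=0$, and $\y>\y/16>0$ because $\y>0$. So the maximum over $A$ is attained on the branch $p=+1$ and equals $\y+\max_{\alpha\in I}(-\alpha^2\y+\alpha(\x-\y))$. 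Substituting into (\ref{eq2}),
\begin{equation*}
\y=(1-\lambda)\y+(1-\lambda)\max_{\alpha\in I}\bigl(-\alpha^2\y+\alpha(\x-\y)\bigr),
\end{equation*}
and rearranging gives (\ref{eq4}).

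The argument for equation (\ref{eq3}) is symmetric. Decomposing by $q$: for $q=-1$ the inner expression equals $\beta^2(\x-1)+1$ with $\beta\in J$; for $q=+1$ it equals $\beta^2(1-\x)+\beta(\y-\x)+\x$ with $\beta\in J$; for $q=0$ it equals $1$. Since $\x<1$, minimising the $q=-1$ branch picks $\beta=1/4$ and gives $(\x-1)/16+1=(15+\x)/16$, while the $q=+1$ branch at $\beta=0$ already gives $\x$. The inequality $\x<(15+\x)/16$ is equivalent to $\x<1$, so the branches $q=-1$ and $q=0$ are strictly dominated, and the minimum is attained on the $q=+1$ branch and equals $\x+\min_{\beta\in J}(\beta^2(1-\x)+\beta(\y-\x))$. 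Substituting into (\ref{eq3}) and rearranging yields (\ref{eq5}).

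The only substantive point, and the one I would emphasise, is the strict dominance step that eliminates $p=-1,0$ and $q=-1,0$; everything else is plugging in the definitions of $A,B$ and the triplets $(\pj,\pjj,\pjjj),(\qj,\qjj,\qjjj)$. These strict dominances rest entirely on the sign inequalities $0<\y$ and $\x<1$, both established just before \cref{lem_dominant}, together with the fact that $I,J\subseteq[0,1/4]$, so no further preparation is required.
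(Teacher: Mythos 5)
Your proof is correct and follows essentially the same route as the paper's: reduce to \cref{lem_dominant}, show that only the branch $p=+1$ (resp.\ $q=+1$) can attain the optimum using $\y>0$ (resp.\ $\x<1$), and then rearrange. The only difference is cosmetic --- the paper establishes $\argmax_p=\{+1\}$ by contradiction inside the proof of \cref{lem_dominant} and invokes it, whereas you re-derive both dominance claims by direct branch-by-branch comparison.
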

The proof of \cref{prop_equationsknown} is postponed to \cref{se:appendix}.

\subsection{A simple counterexample to the convergence of \texorpdfstring{$(\x)_{\lambda\in(0,1)}$}{xlambda} and \texorpdfstring{$(\y)_{\lambda\in(0,1)}$}{ylambda}}
\label{subsec_examplerenault}

\cref{eq4,eq5} are in fact the Shapley equations of the following game, non product, non weakly communicating on both sides, described in \cref{fig_knowngame}. This game does not have a product state space, neither has it a weakly communicating property. This example was introduced by \cite{renault:hal-02130451}. The state space is $\Omega = \{0,1,0^\ast,1^\ast\}$. The action space of player 1 (resp. 2) is $I$ (resp. $J$). Player 1 (resp. 2) plays in state $0$ (resp. $1$). The states $0^\ast$ and $1^\ast$ are absorbing. The payoff in states $0$ and $0^\ast$ (resp. $1$ and $1^\ast$) is $0$ (resp. $1$).
\begin{figure}[ht]
\begin{center}
\begin{tikzpicture}[scale=0.6]
\node at (-2,2){$0$};
\node at (2,2){$1$};
\node at (-2,-2) {$0^\ast$};
\node at (2,-2) {$1^\ast$};
\draw (-2,2) circle (0.4);
\draw (2,2) circle (0.4);
\draw (-2,-2) circle (0.4);
\draw (2,-2) circle (0.4);
\draw[->] (-2,1.4) to (-2,-1.4);
\node [left] at (-2,0) {$\alpha^2$};
\draw[->] (2,1.4) to (2,-1.4);
\node [right] at (2,0) {$\beta^2$};
\draw[->] (-1.4,2.4) to [out=45 ,in=135] (1.4,2.4);
\node [above] at (0,3) {$\alpha$};
\draw[->] (1.4,1.6) to [out=-135 ,in=-45] (-1.4,1.6);
\node [below] at (0,1) {$\beta$};
\draw [->] (2.6,1.6) to [out=315,in=270] (3.5,2) to [out=90,in=45] (2.6,2.4);
\node [right] at (3.5,2) {$1-\beta-\beta^2$};
\draw [->] (-2.6,1.6) to [out=-135,in=270] (-3.5,2) to [out=90,in=135] (-2.6,2.4);
\node [left] at (-3.5,2) {$1-\alpha-\alpha^2$};
\end{tikzpicture}
\end{center}
\caption{A simpler game}
\label{fig_knowngame}
\end{figure}
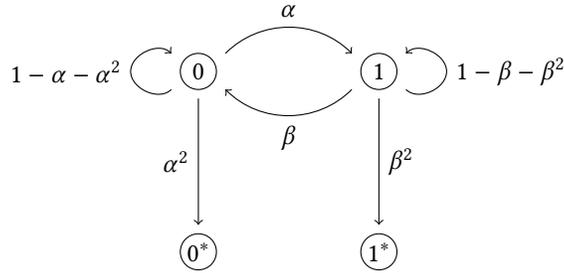

We conclude with the next lemmas showing that $(\x)_{\lambda\in(0,1]}$ and $(\y)_{\lambda\in(0,1]}$ do not converge. The proofs are provided for completeness.

\begin{lemma}
\label{lem_x-y}
For all $\lambda\leq 1/17$, $\beta_\lambda = \frac{\x-\y}{2(1-\x)}$ is optimal for player 2. Moreover
\begin{equation}
\label{eq_1-x}
4\lambda (1-\x)^2 = (1-\lambda)(\x-\y)^2.
\end{equation}
Hence $\x-\y$ goes to $0$ as $\lambda$ goes to $0$.
\end{lemma}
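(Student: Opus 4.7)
The plan is to exploit equation (\ref{eq5}) of \cref{prop_equationsknown}, which gives a one-variable convex minimization on $J=[0,1/4]$ whose solution we can read off, provided the unconstrained minimizer lies inside $J$.

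First I would examine the quadratic $q(\beta)\coloneqq\beta^{2}(1-\x)+\beta(\y-\x)$ appearing in (\ref{eq5}). Since $\x<1$ (noted before the statement) this quadratic is strictly convex, and since $\x>\y$ by (\ref{eq1}) its unconstrained minimizer
\[
\beta^{\ast}=\frac{\x-\y}{2(1-\x)}
\]
is strictly positive. The constrained minimizer on $J=[0,1/4]$ is therefore $\min(\beta^{\ast},1/4)$.

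The core step is to show that for $\lambda\le 1/17$ the unconstrained minimizer satisfies $\beta^{\ast}\le 1/4$, so $\beta_\lambda=\beta^{\ast}$ is admissible and optimal. I would do this by a short bootstrap: suppose toward contradiction that $\beta^{\ast}>1/4$. Then the optimum on $J$ is attained at $\beta=1/4$, and substituting this value into (\ref{eq5}) gives a linear equation for $\x-\y$ whose solution one computes explicitly as
\[
\x-\y=\frac{4\lambda}{1-\lambda}(1-\x)+\frac{1-\x}{4}.
\]
Dividing by $1-\x>0$ and plugging back into the assumed inequality $\x-\y>\tfrac12(1-\x)$ produces $\tfrac{4\lambda}{1-\lambda}>\tfrac14$, i.e.\ $\lambda>1/17$, contradicting the hypothesis. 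Hence $\beta_\lambda=\tfrac{\x-\y}{2(1-\x)}\in J$ and is optimal.

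Knowing the optimizer, I would plug it back into (\ref{eq5}). A direct computation yields
\[
\min_{\beta\in J}q(\beta)=q(\beta^{\ast})=-\frac{(\x-\y)^{2}}{4(1-\x)},
\]
so (\ref{eq5}) becomes $\lambda\x=\lambda-(1-\lambda)\frac{(\x-\y)^{2}}{4(1-\x)}$. Rearranging gives $\lambda(1-\x)=(1-\lambda)\frac{(\x-\y)^{2}}{4(1-\x)}$, and multiplying by $4(1-\x)$ delivers the claimed identity (\ref{eq_1-x}). Finally, since $1-\x\le 1$, the identity implies $(\x-\y)^{2}\le \frac{4\lambda}{1-\lambda}$, which tends to $0$ as $\lambda\to 0$, concluding the last assertion.

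The only delicate point is verifying that $\beta^{\ast}$ stays inside the compact set $J$; this is precisely why the threshold $\lambda\le 1/17$ appears. Everything else is algebraic manipulation of the simplified Shapley equation (\ref{eq5}).
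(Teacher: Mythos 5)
Your proposal is correct and follows essentially the same route as the paper: identify the unconstrained minimizer $\beta^{\ast}=\frac{\x-\y}{2(1-\x)}$ of the convex quadratic in \eqref{eq5}, rule out $\beta^{\ast}>1/4$ by evaluating \eqref{eq5} at $\beta=1/4$ and deriving $\lambda>1/17$, and then substitute $\beta^{\ast}$ back to obtain \eqref{eq_1-x}. You merely write out explicitly the computations the paper compresses into ``it is easy to verify that the lemma holds.''
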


\begin{proof}
Suppose that $\beta_\lambda\leq 1/4$, then it is easy to verify that the lemma holds.
Suppose now that $\beta_\lambda > 1/4$. Then the minimum in \cref{eq5} is attained in $1/4$. It yields
\[\frac{1}{4}(1-\lambda)(\x-\y) = (1-\x)\left(\frac{1+15\lambda}{16}\right)>(1-\lambda)\frac{1-\x}{8}.\]
Hence $\lambda>1/17.$
\end{proof}

Let $(\lambda_n)_{n\in\N^\ast}\in(0,1]^{\N^\ast}$ such that $\lambda_n\to 0$ as $n$ goes to $+\infty$.

\begin{lemma}
\label{lem_vleq1/2}
If  $(x_{\lambda_n})_n$ and $(y_{\lambda_n})_n$ converge to $v\in[0,1]$, then $v\leq 1/2$. Moreover $x_{\lambda_n}-y_{\lambda_n}\sim 2 \sqrt{\lambda_n}(1-v)$ and $\beta_{\lambda_n}\sim \sqrt{\lambda_n}$ as $n$ goes to $+\infty$.
\end{lemma}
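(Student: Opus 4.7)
The plan is to extract the inequality $v \leq 1/2$ directly from the Shapley equation \eqref{eq4} by comparing the discrete maximum over $\alpha \in I$ to the unconstrained maximum over $\alpha \in \R_+$, and then to read the two asymptotics off from equation \eqref{eq_1-x} of \cref{lem_x-y} by elementary algebra.

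First, note that the function $\alpha \mapsto -\alpha^2 \y + \alpha(\x-\y)$ is a downward parabola whose maximum over $\R_+$ equals $(\x-\y)^2/(4\y)$, attained at $\alpha^\ast = (\x-\y)/(2\y) > 0$. Since $I \subset \R_+$, equation \eqref{eq4} gives
\begin{equation*}
\lambda \y \;=\; (1-\lambda) \max_{\alpha\in I}\bigl(-\alpha^2 \y + \alpha(\x-\y)\bigr) \;\leq\; (1-\lambda)\,\frac{(\x-\y)^2}{4\y}.
\end{equation*}
Multiplying by $4\y$ yields $4\lambda \y^2 \leq (1-\lambda)(\x-\y)^2$, and substituting identity \eqref{eq_1-x} from \cref{lem_x-y} gives $4\lambda \y^2 \leq 4\lambda(1-\x)^2$. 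Since $\y \geq 0$ and $\x \leq 1$, this simplifies to $\y \leq 1 - \x$ for every $\lambda \in (0,1/17]$. Passing to the limit along $(\lambda_n)$ yields $v \leq 1 - v$, i.e., $v \leq 1/2$.

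For the asymptotics, take square roots in \eqref{eq_1-x}: since $\x \leq 1$,
\begin{equation*}
\x - \y \;=\; 2(1-\x)\sqrt{\frac{\lambda}{1-\lambda}}.
\end{equation*}
As $n \to \infty$, $\x_{\lambda_n} \to v$, so $1 - \x_{\lambda_n} \to 1 - v$, and $\sqrt{\lambda_n/(1-\lambda_n)} \sim \sqrt{\lambda_n}$; hence $\x_{\lambda_n} - \y_{\lambda_n} \sim 2\sqrt{\lambda_n}(1-v)$. Plugging this expression back into the formula $\beta_\lambda = (\x-\y)/(2(1-\x))$ from \cref{lem_x-y} gives the purely algebraic identity $\beta_\lambda = \sqrt{\lambda/(1-\lambda)}$, which is manifestly $\sim \sqrt{\lambda_n}$. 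No obstacle beyond these two lines of computation is expected; the substantive content is really the comparison of the discrete max over $I$ with its unconstrained counterpart, which fortunately goes in the direction we need.
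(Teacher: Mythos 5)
Your proof is correct and follows essentially the same route as the paper: both combine \cref{eq4} with the identity \cref{eq_1-x}, the only difference being that you bound the maximum over $I$ by the unconstrained vertex value $(\x-\y)^2/(4\y)$ whereas the paper applies the equivalent AM--GM inequality $\lambda+\alpha_\lambda^2\geq 2\alpha_\lambda\sqrt{\lambda}$ at the optimal action $\alpha_\lambda$. The two asymptotics are then read off from \cref{eq_1-x} exactly as the paper intends.
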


\begin{proof}
Let $\alpha_\lambda$ be an optimal strategy of player 1. By \cref{eq_1-x} one gets
\[2\alpha_\lambda\sqrt{\lambda}\y \leq \y(1+\alpha_\lambda^2)=\lambda\y\alpha_\lambda^2+2\alpha_\lambda\sqrt{\lambda}\sqrt{1-\lambda}(1-\x).\]
Dividing by $\alpha_\lambda\sqrt{\lambda}$ and passing to the limit yields $v\leq 1/2$.
\end{proof}

In the next two lemmas, we show that the game does not have an asymptotic value. The idea is the following: player 1 would like to play in the $\lambda$-discounted game some $\alpha$ close to $\sqrt{\lambda}\frac{1-v}{v}$ in state $0$, where $v$ is the limit (up to some subsequence) of $(x_{\lambda_n})_n$ and $(y_{\lambda_n})_n$. If player 1 is not allowed to take any absorbing risk in $[0,1/4]$, but player 2 is, we expect the values $(x_{\lambda_n})_n$ and $(y_{\lambda_n})_n$ to oscillate.

\begin{lemma}
\label{lem_v=1/2}
If for all $n\in\N^\ast$, $\sqrt{\lambda_n}\in I$ then \[\lim_{n\to+\infty} x_{\lambda_n}=1/2.\]
\end{lemma}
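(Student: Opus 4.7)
The plan is to combine the lower bound $v\leq 1/2$ from Lemma lem_vleq1/2 with a matching lower bound $v\geq 1/2$ obtained by exploiting the fact that, under the hypothesis $\sqrt{\lambda_n}\in I$, player 1 is permitted to play the action $\alpha=\sqrt{\lambda_n}$ in the Shapley equation \eqref{eq4}. The heuristic behind this choice is that the unconstrained argmax in $\alpha\mapsto -\alpha^2\y+\alpha(\x-\y)$ is $\alpha^\ast=(\x-\y)/(2\y)$, which by Lemma lem_vleq1/2 is of order $\sqrt{\lambda}(1-v)/v$, so player 1's optimal unrestricted play asymptotically matches $\sqrt{\lambda}$ precisely when $v=1/2$. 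When $\sqrt{\lambda_n}$ happens to lie in $I$, player 1 can realize this optimum, which forces $v\geq 1/2$.

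First I would extract, by compactness of $[0,1]$, a convergent subsequence of $(x_{\lambda_n})_n$ with limit $v\in[0,1]$. By Lemma lem_x-y we have $x_{\lambda_n}-y_{\lambda_n}\to 0$, so $(y_{\lambda_n})_n$ converges to the same $v$ along this subsequence. By Lemma lem_vleq1/2, $v\leq 1/2$, and moreover
\[
x_{\lambda_n}-y_{\lambda_n}\sim 2\sqrt{\lambda_n}(1-v)\qquad\text{as }n\to+\infty.
\]

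Next, since $\sqrt{\lambda_n}\in I$, the action $\alpha=\sqrt{\lambda_n}$ is admissible in \eqref{eq4}, so
\[
\lambda_n y_{\lambda_n}\;\geq\;(1-\lambda_n)\bigl(-\lambda_n y_{\lambda_n}+\sqrt{\lambda_n}(x_{\lambda_n}-y_{\lambda_n})\bigr).
\]
Rearranging gives $\lambda_n y_{\lambda_n}(2-\lambda_n)\geq (1-\lambda_n)\sqrt{\lambda_n}(x_{\lambda_n}-y_{\lambda_n})$. Dividing through by $\sqrt{\lambda_n}$ and substituting the asymptotic $x_{\lambda_n}-y_{\lambda_n}=2\sqrt{\lambda_n}(1-v)(1+o(1))$ yields
\[
y_{\lambda_n}(2-\lambda_n)\;\geq\;2(1-\lambda_n)(1-v)(1+o(1)).
\]
Passing to the limit along the subsequence gives $2v\geq 2(1-v)$, i.e.\ $v\geq 1/2$, which combined with $v\leq 1/2$ forces $v=1/2$.

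Since every convergent subsequence of $(x_{\lambda_n})_n$ converges to $1/2$ and the sequence lies in the compact set $[0,1]$, the whole sequence converges to $1/2$. The only subtle step is justifying the asymptotic $x_{\lambda_n}-y_{\lambda_n}\sim 2\sqrt{\lambda_n}(1-v)$ along the particular extracted subsequence, but this is exactly the content of Lemma lem_vleq1/2, so no additional work is needed there. The main conceptual point is that the discrete structure of $I$ allows player 1 to hit the asymptotically optimal order $\alpha\sim\sqrt{\lambda_n}$ exactly along this sparse subsequence of discount factors, which is precisely the mechanism that will later cause oscillation of $(x_\lambda)_{\lambda}$ when contrasted with Lemma lem_limsupleq4/9.
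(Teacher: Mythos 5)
Your proof is correct and follows essentially the same route as the paper: substitute the admissible action $\alpha=\sqrt{\lambda_n}$ into \cref{eq4}, use the asymptotic $x_{\lambda_n}-y_{\lambda_n}\sim 2\sqrt{\lambda_n}(1-v)$ from \cref{lem_vleq1/2} to get $v\geq 1/2$, and combine with $v\leq 1/2$. The only difference is that you spell out the subsequence-extraction and the passage from subsequential limits to convergence of the full sequence, which the paper leaves implicit.
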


\begin{proof}
Assume that, up to some subsequence, $(x_{\lambda_n})_n$ and $(y_{\lambda_n})_n$ converge to some $v\in [0,1]$.
If player 1 plays $\alpha = \sqrt{\lambda}$, \cref{eq4} yields
\[\lambda\y \geq (1-\lambda)\sqrt{\lambda}(\x-\y)-(1-\lambda)\lambda\y.\]
Dividing by $\lambda$ and passing to the limit, one gets $v\geq 1/2$. By \cref{lem_vleq1/2}, $v=1/2$.
\end{proof}

\begin{lemma}
\label{lem_limsupleq4/9}
If for all $n\in\N^\ast$, $\left(1/2\sqrt{\lambda_n},2\sqrt{\lambda_n}\right)\cap I = \emptyset$ then $\limsup_{n\to+\infty} x_{\lambda_n}\leq 4/9.$
\end{lemma}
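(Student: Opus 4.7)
The plan is to extract a subsequence along which $(x_{\lambda_n})_n$ converges to $v^\ast := \limsup_{n\to+\infty} x_{\lambda_n}$, invoke \cref{lem_x-y} to conclude that $(y_{\lambda_n})_n$ converges to the same limit, and argue by contradiction assuming $v^\ast > 4/9$. Throughout I will use, thanks to \cref{lem_vleq1/2}, that $v^\ast \leq 1/2$ and $x_{\lambda_n}-y_{\lambda_n}\sim 2\sqrt{\lambda_n}\,(1-v^\ast)$, together with the identity $x_{\lambda_n}-y_{\lambda_n} = 2\sqrt{\lambda_n/(1-\lambda_n)}\,(1-x_{\lambda_n})$ coming from \eqref{eq_1-x}. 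The core of the argument will be to identify player $1$'s optimal action $\alpha_n$ in the Shapley equation \eqref{eq4}, substitute, and pass to the limit.

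The main step pins down $\alpha_n$ asymptotically. First I would observe that the structure of $I = \{0\}\cup\{4^{-k} : k\geq 1\}$ makes the hypothesis rigid: the bounded gaps of $I$ in $(0,1)$ have ratio $4$ between their endpoints, and so does the forbidden open interval $(\tfrac{1}{2}\sqrt{\lambda_n},2\sqrt{\lambda_n})$; hence the condition $(\tfrac{1}{2}\sqrt{\lambda_n},2\sqrt{\lambda_n})\cap I = \emptyset$ forces the equalities $\tfrac{1}{2}\sqrt{\lambda_n}=4^{-k_n-1}$ and $2\sqrt{\lambda_n}=4^{-k_n}$ for some integer $k_n\geq 1$ once $\lambda_n$ is small, and in particular both endpoints of the forbidden interval lie in $I$. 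Second, I would exploit that the function $f_n(\alpha) = -\alpha^2 y_{\lambda_n}+\alpha(x_{\lambda_n}-y_{\lambda_n})$ is strictly concave with unconstrained maximum at $\alpha^\ast_n = (x_{\lambda_n}-y_{\lambda_n})/(2y_{\lambda_n}) \sim \sqrt{\lambda_n}(1-v^\ast)/v^\ast$; under $v^\ast\in(4/9,1/2]$ the quotient $(1-v^\ast)/v^\ast$ lies in $[1,5/4)$, so $\alpha^\ast_n$ sits strictly inside the forbidden interval for large $n$. By concavity, the maximum of $f_n$ on the admissible set $I\setminus (\tfrac{1}{2}\sqrt{\lambda_n},2\sqrt{\lambda_n})$ is then attained at $0$ or at one of the two endpoints. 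A direct asymptotic expansion gives $f_n(\tfrac{1}{2}\sqrt{\lambda_n})\sim \lambda_n(4-5v^\ast)/4$ and $f_n(2\sqrt{\lambda_n})\sim 4\lambda_n(1-2v^\ast)$, and the elementary equivalence $(4-5v)/4>4(1-2v)\Leftrightarrow v>4/9$ implies $f_n(\tfrac{1}{2}\sqrt{\lambda_n})>\max\bigl(f_n(2\sqrt{\lambda_n}),0\bigr)$ for all large $n$. Thus $\alpha_n = \tfrac{1}{2}\sqrt{\lambda_n}$ is optimal.

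The final step is purely algebraic. Substituting $\alpha_n = \tfrac{1}{2}\sqrt{\lambda_n}$ into \eqref{eq4} and invoking the identity of \eqref{eq_1-x}, a short manipulation reduces the Shapley equation to
\[
\frac{5-\lambda_n}{4}\, y_{\lambda_n} \;=\; \sqrt{1-\lambda_n}\,(1-x_{\lambda_n}),
\]
whose limit as $n\to+\infty$ gives $5v^\ast/4 = 1-v^\ast$, i.e.\ $v^\ast = 4/9$, contradicting $v^\ast>4/9$. The delicate part is the middle step, where one has to exploit both the tight geometric fit of the forbidden interval inside a gap of $I$ and carefully compare the two boundary candidates together with $f_n(0)=0$; everything else is a routine asymptotic computation.
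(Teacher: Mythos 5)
Your proof is correct and rests on the same mechanism as the paper's: the unconstrained maximizer $\alpha^\ast_{\lambda_n}\sim\sqrt{\lambda_n}\,(1-v)/v$ falls inside the excluded interval because $4/9<v\leq 1/2$, so by concavity the relevant candidates are the endpoints $\tfrac12\sqrt{\lambda_n}$ and $2\sqrt{\lambda_n}$, and the asymptotic evaluation of the objective there yields $v\leq 4/9$. The only difference is cosmetic: the paper treats the two endpoints as a two-case inequality bound on $\lambda_n y_{\lambda_n}$ without needing them to belong to $I$, whereas you additionally use the rigidity of $I$ to identify the optimal action exactly and obtain an exact equation before passing to the limit --- a valid but slightly stronger route to the same conclusion.
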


\begin{proof}
Suppose that up to some subsequence, $(x_{\lambda_n})_n$ and $(y_{\lambda_n})_n$ converge to some $v\geq 4/9$.
By \cref{lem_vleq1/2} $v\leq 1/2$.  
Let $\alpha_\lambda^\ast = \frac{\x-\y}{2\y}>0$ be the argument of the maximum of the unconstrained problem associated to \cref{eq4}. Then $\alpha_\lambda^\ast \sim \sqrt{\lambda}\frac{1-v}{v}$. Hence for $\lambda$ small enough in the sequence, $1/2\sqrt{\lambda}\leq \alpha_\lambda^\ast\leq 2 \sqrt{\lambda}.$

The open interval $\left(1/2\sqrt{\lambda_n},2\sqrt{\lambda_n}\right)$ does not contain any point in $I$. Furthermore the objective function of player 1 is increasing between $0$ and $\alpha_\lambda^\ast$, and decreasing after. 

First case, $\alpha_\lambda\leq 1/2\sqrt{\lambda}$. Then \[\lambda\y \leq 1/2(1-\lambda)\sqrt{\lambda}(\x-\y) - 1/4(1-\lambda)\lambda\y.\]
Dividing by $\lambda$ and passing to the limit yields $v\leq 4/9$.

Second case, $\alpha_\lambda\geq 2\sqrt{\lambda}$. Then \[\lambda\y \leq 2(1-\lambda)\sqrt{\lambda}(\x-\y) - 4(1-\lambda)\lambda\y,\] and again $v\leq 4/9$.
\end{proof}

Thus taking the sequences $\lambda_n = \frac{1}{2^{2n}}$ and $\lambda'_n = \frac{1}{2^{2n+1}}$, for all $n\geq1$, one has from \cref{lem_v=1/2,lem_limsupleq4/9} that $(\x)_{\lambda\in(0,1]}$ and $(\y)_{\lambda\in(0,1]}$ do not converge as $\lambda$ goes to $0$. 

\appendix
\section{Omitted proofs}
\label{se:appendix}

\begin{proof}[Proof of \cref{lem_dominant}]
We consider the game starting in state $(x,x',2)$. Player 2 can play $(\beta,q)=(0,+1)$, hence
\begin{equation*}
\y \leq(1-\lambda)\max_{(\alpha,p)\in A} \left(\alpha^2(\pj-\pjjj)\y+\alpha\pjjj(\x-\y)+\pjjj\y\right).
\end{equation*}
Suppose \[\{0\}\in\argmax_{p} \left(\max_{\alpha\in \{0,1\}} \left(\alpha^2(\pj-\pjjj)\y+\alpha\pjjj(\x-\y)+\pjjj\y\right)\right).\]
Then $\y \leq (1-\lambda)\max_{\alpha\in \{0,1\}} 0$, hence $\y = 0$, contradiction.\\
Suppose \[\{-1\}\in\argmax_{p} \left(\max_{\alpha\in I} \left(\alpha^2(\pj-\pjjj)\y+\alpha\pjjj(\x-\y)+\pjjj\y\right)\right).\]
Then $\y\leq (1-\lambda)\max_{\alpha\in I}\alpha^2\y = \frac{1-\lambda}{16}\y$. Thus $\y=0$, contradiction.\\
Thus \[\argmax_{p} \left(\max_{\alpha} \left(\alpha^2(\pj-\pjjj)\y+\alpha\pjjj(\x-\y)+\pjjj\y\right)\right)=\{+1\}.\]
Hence $\lambda\y\leq(1-\lambda)\max_{\alpha\in I}\left(-\alpha^2+\alpha(\x-\y)\right).$
Thus $\x>\y$, and \cref{eq1} is proved.

To prove \cref{eq2}, we show that $(0,+1)$ is a dominant strategy of player 2, i.e.,
\begin{align*}
\forall (\alpha,p)\in A \ \forall (\beta,q)\in B \ h(\y,\x,\alpha,p,\beta,q)-h(\y,\x,\alpha,p,0,+1)\geq 0.
\end{align*}
There are 9 cases to test, corresponding to the different values of $(p,q)\in\{-1,0,+1\}^2$.
\paragraph{\nth{1} case: $\pjjj=\qjjj=1$}
\begin{align*}
&\left((1-\alpha-\alpha^2)(-\beta-\beta^2)+\alpha\beta+\alpha^2\beta^2\right)\y +\\ &\left((1-\alpha-\alpha^2)\beta+(-\beta-\beta^2)\alpha\right)\x + \beta^2(1-\alpha^2)\\
=&\left((1-2\alpha-\alpha^2-\alpha\beta)(\x-\y)+\beta(1-\alpha^2)(1-\y)+\beta\alpha^2\y\right)\beta \geq 0.
\end{align*}
\paragraph{\nth{2} case: $\pjjj=\qjj=1$}
$1-\alpha^2\geq 0$
\paragraph{\nth{3} case: $\pjjj=\qj=1$}
\begin{align*}
&\left((1-\alpha-\alpha^2)\beta^2+(1-\beta-\beta^2)\alpha^2 -(1-\alpha-\alpha^2)\right)\y + \\ & (\alpha\beta^2+\beta\alpha^2-\alpha)\x + (1-\alpha^2)(1-\beta^2)\\
= & \left((\beta^2-1)(1-\alpha-2\alpha^2)-\beta\alpha^2\right)\y + \\ &\left(\beta\alpha^2+\alpha(\beta^2-1)\right)\x+(1-\alpha^2)(1-\beta^2)\\
= & \beta\alpha^2(\x-\y)+\alpha(1-\beta^2)(1-\x)+ \\ &(1-\beta^2)(1-\alpha-\alpha^2)(1-\y)+(1-\beta^2)\alpha^2\y\geq 0
\end{align*}
\paragraph{\nth{4} case: $\pjj=\qjjj=1$}
$\beta^2\geq 0$
\paragraph{\nth{5} case: $\pjj=\qjj=1$}
\begin{align*}
\left((1-\alpha)(1-\beta)+\alpha\beta\right)\y+\left((1-\alpha)\beta+(1-\beta)\alpha\right)\x\geq 0
\end{align*}
\paragraph{\nth{6} case: $\pjj=\qj=1$}
$1-\beta^2\geq 0$
\paragraph{\nth{7} case: $\pj=\qjjj=1$}
\begin{align*}
&\left((1-\alpha-\alpha^2)\beta^2+(1-\beta-\beta^2)\alpha^2-\alpha^2\right)\y+(\alpha^2\beta+\alpha\beta^2)\x+\alpha^2\beta^2\\
= & \left((1-\alpha-\alpha^2)\beta^2+(-\beta-\beta^2)\alpha^2\right)\y+(\alpha^2\beta+\alpha\beta^2)\x+\alpha^2\beta^2\\
= & (1-2\alpha^2)\beta^2\y+(\alpha^2\beta+\alpha\beta^2)(\x-\y)+\alpha^2\beta^2\geq 0
\end{align*}
\paragraph{\nth{8} case: $\pj=\qjj=1$}
$\alpha^2\geq 0$
\paragraph{\nth{9} case: $\pj=\qj=1$}
\begin{align*}
&\left((1-\alpha-\alpha^2)(1-\beta-\beta^2)+\alpha\beta+\alpha^2\beta^2-\alpha^2\right)\y+\\ &\left((1-\alpha-\alpha^2)\beta+(1-\beta-\beta^2)\alpha\right)\x+\alpha^2(1-\beta^2)\\
= & \left((1-\alpha-\alpha^2)(1-\beta-\beta^2)+\alpha\beta\right)\y+\\ &\left((1-\alpha-\alpha^2)\beta+(1-\beta-\beta^2)\alpha\right)\x+\alpha^2(1-\beta^2)(1-\y)\geq 0
\end{align*}
\cref{eq2} is thus proved.
 
We now consider the game starting in state $(x,y',2)$.
To prove \cref{eq3}, we show that $(0,+1)$ is a dominant strategy of player 1, i.e.,
\begin{align*}
\forall (\alpha,p)\in A \ \forall (\beta,q)\in B \ h(\x,\y,\alpha,p,\beta,q)-h(\x,\y,0,+1,\beta,q)\leq 0.
\end{align*}
There are again 9 cases to test, corresponding to the different values of $(p,q)\in\{-1,0,+1\}^2$.

\paragraph{\nth{1} case: $\pjjj=\qjjj=1$}
\begin{align*}
&\left((1-\alpha-\alpha^2)(1-\beta-\beta^2)+\alpha\beta+\alpha^2\beta^2-(1-\beta-\beta^2)\right)\x + \\ &\left((1-\alpha-\alpha^2)\beta+(1-\beta-\beta^2)\alpha-\beta\right)\y + \beta^2(1-\alpha^2)-\beta^2\\
= &\left((-\alpha-\alpha^2)(1-\beta-\beta^2)+\alpha\beta+\alpha^2\beta^2\right)\x +\\ & \left((-\alpha-\alpha^2)\beta+(1-\beta-\beta^2)\alpha\right)\y -\alpha^2\beta^2\\
=&\alpha(1-2\beta-\beta^2-\alpha\beta)(\y-\x)-\alpha^2(1-\beta^2)\x+\alpha^2\beta^2(\x-1) \leq 0.
\end{align*}
\paragraph{\nth{2} case: $\pjjj=\qjj=1$}
$1-1\leq 0$
\paragraph{\nth{3} case: $\pjjj=\qj=1$}
\begin{align*}
&\left((1-\alpha-\alpha^2)\beta^2+(1-\beta-\beta^2)\alpha^2 -\beta^2\right)\x +\\& (\alpha\beta^2+\beta\alpha^2)\y + (1-\alpha^2)(1-\beta^2) - (1-\beta^2)\\
= & \left(\beta^2(-\alpha-\alpha^2)+\alpha^2(1-\beta-\beta^2)\right)\x+\left(\beta\alpha^2+\alpha\beta^2\right)\y-\alpha^2(1-\beta^2)\\
= & \left(\beta\alpha^2+\alpha\beta^2\right)(\y-\x)+\alpha^2(1-\beta^2)(\x-1)-\alpha^2\beta^2\x\leq 0\\
\end{align*}
\paragraph{\nth{4} case: $\pjj=\qjjj=1$}
$-(1-\beta-\beta^2)\x-\beta\y+\beta^2-\beta^2\leq 0$
\paragraph{\nth{5} case: $\pjj=\qjj=1$}
\begin{align*}
&\left((1-\alpha)(1-\beta)+\alpha\beta\right)\x+\left((1-\alpha)\beta+(1-\beta)\alpha\right)\y-1\\
= & \left((1-\alpha)\beta+\alpha(1-\beta)\right)(\y-\x)+\x-1\leq 0 
\end{align*}
\paragraph{\nth{6} case: $\pjj=\qj=1$}
$1-\beta^2-(1-\beta^2)-\beta^2\x\leq 0$
\paragraph{\nth{7} case: $\pj=\qjjj=1$}
\begin{align*}
&\left((1-\alpha-\alpha^2)\beta^2+(1-\beta-\beta^2)\alpha^2-(1-\beta-\beta^2)\right)\x+\\&(\alpha^2\beta+\alpha\beta^2-\beta)\y+\alpha^2\beta^2-\beta^2\\
= & (\alpha^2-1)\beta\y + \alpha\beta^2(\y-\x)+\\&(1-\alpha^2)(2\beta^2-1)\x+(\alpha^2-1)\beta(1-\x)\leq 0
\end{align*}
\paragraph{\nth{8} case: $\pj=\qjj=1$}
$\alpha^2-1\leq 0$
\paragraph{\nth{9} case: $\pj=\qj=1$}
\begin{align*}
&\left((1-\alpha-\alpha^2)(1-\beta-\beta^2)+\alpha\beta+\alpha^2\beta^2-\beta^2\right)\x+\\&\left((1-\alpha-\alpha^2)\beta+(1-\beta-\beta^2)\alpha\right)\y+\alpha^2(1-\beta^2)-(1-\beta^2)\\
= & \left((1-\alpha-\alpha^2)\beta+(1-\beta-\beta^2)\alpha\right)(\y-\x)+\\&\beta^2(\alpha^2-1)\x+(1-\alpha^2)(1-\beta^2)(\x-1)\leq 0
\end{align*}
Thus \cref{eq3} is proved.
\end{proof}

\begin{proof}[Proof of \cref{prop_equationsknown}]
It has been seen in the proof of \cref{lem_dominant} that 
\begin{equation*}
\argmax_{p} \left(\max_{\alpha} \left(\alpha^2(\pj-\pjjj)\y+\alpha\pjjj(\x-\y)+\pjjj\y\right)\right)=\{+1\}.
\end{equation*}
Thus one deduces \cref{eq4}.
Likewise,
\begin{equation*}
\argmin_{q} \left(\min_{\beta}\left(\beta^2(\qj-\qjjj)(\x-1)+\beta\qjjj(\y-\x)+\pjjj\y\right)\right) = \{+1\}.
\end{equation*}
And \cref{eq5} is proved.
\end{proof}

\section*{Acknowledgments}
The author wishes to express his gratitude to his Ph.D. advisor Jérôme Renault, as well as Fabien Gensbittel, Xavier Venel and Bruno Ziliotto for meaningful discussions.

The author also thanks an anonymous referee for his detailed and thorough comments, which contributed to significantly improve the article.

\bibliographystyle{apalike}
\bibliography{bib_total}

\end{document}